\title{Direct images of relative pluricanonical bundles}
\author{Osamu Fujino} 
\date{2015/4/22, version 0.21}
\keywords{nef, semipositivity, pluricanonical bundles, good minimal models, 
weak semistable reduction, effective freeness}
\subjclass[2010]{Primary 14D06; Secondary 14E30}
\address{Department of Mathematics, Graduate School of Science, 
Kyoto University, Kyoto 606-8502, Japan}
\email{fujino@math.kyoto-u.ac.jp}
\newcommand{\Exc}[0]{{\operatorname{Exc}}}
\newtheorem{thm}{Theorem}[section]
\newtheorem{lem}[thm]{Lemma}
\newtheorem{cor}[thm]{Corollary}
\newtheorem{conj}[thm]{Conjecture}
\theoremstyle{definition}
\newtheorem{defn}[thm]{Definition}
\newtheorem{rem}[thm]{Remark}
\newtheorem*{ack}{Acknowledgments} 
\newtheorem{say}[thm]{}
\newtheorem{step}{Step}
\begin{document}

\begin{abstract}
We discuss the local freeness and the 
numerical semipositivity of 
direct images of relative pluricanonical 
bundles for surjective morphisms between smooth projective varieties 
with connected fibers. 
We give a desirable semipositivity theorem under the 
assumption that 
the geometric generic fiber has a good minimal model. 
\end{abstract}

\maketitle

\tableofcontents 

\section{Introduction}\label{sec1}

By Griffiths's theory of variations of Hodge structure (see 
\cite{griffiths}), we have: 

\begin{thm}[Griffiths]\label{thm1.1} 
Let $f:X\to Y$ be a smooth morphism between smooth projective 
varieties. Then $f_*\omega_{X/Y}$ is a nef locally free 
sheaf. \end{thm}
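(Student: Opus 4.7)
The plan is to establish the two conclusions---local freeness and nefness---separately. For local freeness, I observe that since $f : X \to Y$ is smooth and projective, Ehresmann's theorem makes it a $C^\infty$ fiber bundle, so the Hodge numbers $h^{p,q}(X_y)$ are locally constant in $y$. With $n$ denoting the relative dimension, the rank $h^0(X_y, \omega_{X_y}) = h^{n,0}(X_y)$ is locally constant, and Grauert's theorem then yields local freeness of $f_*\omega_{X/Y}$ with fibers canonically identified with $H^0(X_y, \omega_{X_y})$.

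For nefness, I first reduce to the case of a curve base: given any morphism $\gamma : C \to Y$ from a smooth projective curve, local freeness together with the cohomology-and-base-change machinery (valid because $f$ is smooth) gives $\gamma^* f_*\omega_{X/Y} \cong (f_C)_* \omega_{X_C/C}$, where $f_C : X_C \to C$ is the base change. So I may assume $\dim Y = 1$. I then equip $f_*\omega_{X/Y}$ with the Hodge $L^2$ metric, given on local sections $\sigma$ of $\omega_{X_y}$ by $\|\sigma\|^2$ proportional to $\int_{X_y} \sigma \wedge \bar\sigma$, and identify this bundle with the top Hodge piece $F^n \mathcal{H}$ of the polarized variation of Hodge structure on $\mathcal{H} = R^n f_*\mathbb{C} \otimes_{\mathbb{C}} \mathcal{O}_Y$.

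The core input is then Griffiths's curvature formula. Using flatness of the Gauss--Manin connection, Griffiths transversality $\nabla(F^p\mathcal{H}) \subset F^{p-1}\mathcal{H}\otimes \Omega^1_Y$, and the orthogonality of the Hodge decomposition with respect to the Hodge metric, one splits $\nabla$ as a sum of the Chern connection on each graded piece together with the Kodaira--Spencer-type off-diagonal operators. A direct computation of $\nabla^2 = 0$ shows that the Chern curvature of the Hodge metric on the top piece $F^n\mathcal{H}$ is Griffiths semi-positive, since for the top filtration step only the ``second fundamental form'' term of positive sign contributes.

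Finally, since a Hermitian holomorphic vector bundle with Griffiths semi-positive curvature is nef in the sense of algebraic geometry, the proof is complete. The main obstacle will be the curvature computation itself, which is the substantive content Griffiths contributed; the remaining steps---Ehresmann plus Grauert for local freeness, flat base change, and the passage from curvature semi-positivity to nefness---are standard formalism.
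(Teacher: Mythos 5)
Your proposal is correct and follows exactly the route the paper intends: the paper offers no proof of Theorem~\ref{thm1.1} beyond citing Griffiths's theory of variations of Hodge structure, and your sketch (constancy of $h^{n,0}$ via Ehresmann plus upper semicontinuity, then Grauert for local freeness; the Hodge metric on $F^n\mathcal{H}$ and Griffiths's curvature computation, in which the negative second-fundamental-form term is absent for the top filtration step, for semipositivity and hence nefness) is precisely that argument. The paper even remarks that Griffiths's actual result---Griffiths semipositivity of the Hodge metric---is sharper than the nefness stated in Theorem~\ref{thm1.1}, which is exactly the last implication you use.
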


Before we go further, 
let us recall the definition of nef (numerically semipositive) 
locally free sheaves. 
 
\begin{defn}[Nef locally free sheaves]\label{def1.2} 
Let $\mathcal E$ be a locally free sheaf of finite rank on a 
complete algebraic variety $V$. 
Then $\mathcal E$ is called {\em{nef}} if $\mathcal E=0$ or  
$\mathcal O_{\mathbb P_V(\mathcal E)}(1)$ is 
nef on $\mathbb P_V(\mathcal E)$. 
A nef locally free sheaf $\mathcal E$ 
was originally 
called a {\em{$($numerically$)$ semipositive}} locally free sheaf in the literature. 
\end{defn}

Precisely speaking, Griffiths proved that $f_*\omega_{X/Y}$ is 
semipositive in the sense of Griffiths and his result is sharper than 
Theorem \ref{thm1.1}. 
Moreover, Berndtsson proved that $f_*\omega_{X/Y}$ is 
semipositive in the sense of Nakano by $L^2$ method 
(see \cite[Theorem 1.2]{berndtsson}). 
Unfortunately, 
Theorem \ref{thm1.1} is not so useful for various geometric applications 
since we need the smoothness of $f$. 
In \cite{kawamata1}, Kawamata proved 
Theorem \ref{thm1.3}, which is a natural 
generalization of Theorem \ref{thm1.1}, by using 
the theory of variations of Hodge structure (see \cite[Theorem 5]{kawamata1}). 

\begin{thm}[Fujita, Zucker, Kawamata, $\cdots$]
\label{thm1.3} Let $f:X\to Y$ be a surjective morphism between 
smooth projective varieties with connected fibers. 
Then there exists a generically finite morphism $\tau:Y'\to Y$ from 
a smooth projective variety $Y'$ with 
the following property. 
Let $X'$ be any resolution of the main component of $X\times _Y Y'$. 
Then $f'_*\omega_{X'/Y'}$ is a nef locally free sheaf, 
where $f': X'\to X\times _Y Y'\to Y'$. 
\end{thm}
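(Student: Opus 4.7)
The plan is to reduce to a semistable situation and then invoke Kawamata's semipositivity theorem for Hodge bundles. First, I would apply semistable reduction (Kempf--Knudsen--Mumford--Saint-Donat, or the weak semistable reduction of Abramovich--Karu) to obtain a generically finite morphism $\tau\colon Y'\to Y$ from a smooth projective variety $Y'$, together with a resolution $X'$ of the main component of $X\times_Y Y'$, such that $f'\colon X'\to Y'$ is semistable over the complement of a simple normal crossing divisor $\Sigma\subset Y'$. By further base change one also arranges that all local monodromies of the relevant variations of Hodge structure on $V:=Y'\setminus \Sigma$ act unipotently.

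Next, I would identify $f'_*\omega_{X'/Y'}$ with the canonical Deligne extension of the bottom piece of the Hodge filtration. Set $n=\dim X-\dim Y$. Over $V$ the morphism $f'$ is smooth, so $f'_*\omega_{X'/Y'}|_V$ is the lowest Hodge bundle $F^{n}$ of the polarizable variation of Hodge structure $R^{n}(f'|_V)_*\mathbb{Q}$; unipotent monodromies then produce a canonical locally free extension $\overline{F^{n}}$ to all of $Y'$. Using semistability of $f'$, Steenbrink's description of the limiting mixed Hodge structure, and Koll\'ar's torsion-freeness and base-change results for $R^i f_*\omega$, one shows that $f'_*\omega_{X'/Y'}$ is itself locally free and agrees with $\overline{F^{n}}$ globally on $Y'$.

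Finally, Kawamata's semipositivity theorem asserts that the canonical extension of the bottom piece of a polarizable variation of Hodge structure with unipotent monodromies along a simple normal crossing divisor is a nef locally free sheaf. The proof reduces to the case where $Y'$ is a smooth projective curve (by testing nefness against arbitrary morphisms from curves), where the statement follows by combining Griffiths' curvature computations for the Hodge metric with Schmid's nilpotent orbit theorem, which controls the Hodge metric near the degeneration points.

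The main technical obstacle is the second step: matching $f'_*\omega_{X'/Y'}$ globally with the canonical Hodge extension depends crucially on the semistability of $f'$ and on Koll\'ar-type torsion-freeness, and it is precisely here that the generically finite base change $\tau$ is genuinely needed. Once this identification is in hand, Theorem~\ref{thm1.1} handles the smooth locus, and Kawamata's theorem, which is the deepest input of the argument, extends nefness across $\Sigma$.
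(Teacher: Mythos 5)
The paper does not actually prove Theorem~1.3; it quotes it as a known result of Fujita, Zucker, and Kawamata, established by the theory of variations of Hodge structure (the paper cites Kawamata's Theorem~5 and the later treatments by Fujino--Fujisawa and Fujino--Fujisawa--Saito). Your outline is precisely that classical argument---base change to make the local monodromies unipotent, identification of $f'_*\omega_{X'/Y'}$ with the canonical extension of the bottom Hodge bundle, and Kawamata's semipositivity theorem reduced to curves via Griffiths' curvature computation and Schmid's nilpotent orbit theorem---so it agrees with the cited proof; the only points to tidy are that over a higher-dimensional base one has only \emph{weak} semistable reduction, so the identification of $f'_*\omega_{X'/Y'}$ with the canonical extension should be carried out via the Koll\'ar/Nakayama results along a simple normal crossing discriminant rather than via literal semistability and Steenbrink's construction, and that one should observe that $f'_*\omega_{X'/Y'}$ is independent of the choice of resolution $X'$, since the theorem asserts the conclusion for \emph{any} resolution.
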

For the details of Kawamata's original approach and 
various generalizations, see \cite[Theorems 3.1, 3.4, and 
3.9]{fujino-higher}, 
\cite[Theorem 1.1 and Theorem 1.3]{fujino-fujisawa}, 
and \cite[Corollary 2, Theorem 2, and 
Theorem 3]{fujino-fujisawa-saito}.
Theorem \ref{thm1.3} has already played a crucial role 
in the study of higher-dimensional algebraic varieties. 
For some geometric applications, we have to 
treat $f_*\omega^{\otimes m}_{X/Y}$ or 
$f'_*\omega^{\otimes m}_{X'/Y'}$, where $m$ is a positive integer. 
It is well known that Viehweg proved that 
$f_*\omega^{\otimes m}_{X/Y}$ is always 
weakly positive 
for every positive integer $m$ 
in Theorem \ref{thm1.3} (see \cite[Theorem III]{viehweg}). 
His original 
proof of the weak positivity of $f_*\omega^{\otimes m}_{X/Y}$ 
uses his mysterious covering trick and Theorem \ref{thm1.3} 
(see \cite[\S 5]{viehweg}). 

By the way, Theorem \ref{thm1.1} can be generalized as follows. 
\begin{thm}\label{thm1.4}
Let $f:X\to Y$ be a smooth morphism between smooth 
projective varieties. 
Then $f_*\omega^{\otimes m}_{X/Y}$ is a nef 
locally free sheaf for every positive integer $m$. 
\end{thm}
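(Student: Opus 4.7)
The plan is to separate the two assertions: local freeness will come from Siu's invariance of plurigenera on a smooth family, while nefness will be reduced to the case of a base curve and then extracted from Viehweg's weak positivity theorem, which is already available in the setting of Theorem~\ref{thm1.3}.

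First I would establish local freeness. Since $f$ is smooth and projective, for each $y\in Y$ one has $\omega_{X/Y}^{\otimes m}|_{X_y}\cong\omega_{X_y}^{\otimes m}$, so Siu's deformation invariance of plurigenera implies that $y\mapsto h^0(X_y,\omega_{X_y}^{\otimes m})$ is locally constant on $Y$. By Grauert's theorem on cohomology and base change, $f_*\omega_{X/Y}^{\otimes m}$ is then locally free, and its formation commutes with arbitrary base change.

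Next, to check nefness it suffices to show that $g^*f_*\omega_{X/Y}^{\otimes m}$ is nef for every morphism $g\colon C\to Y$ from a smooth projective curve $C$. Forming the cartesian square and writing $f_C\colon X_C\to C$ for the base change, $f_C$ is again smooth and projective with connected fibers, and by the base change compatibility established in the previous step we have $g^*f_*\omega_{X/Y}^{\otimes m}\cong f_{C*}\omega_{X_C/C}^{\otimes m}$. Viehweg's theorem \cite[Theorem~III]{viehweg}, recalled in the introduction, asserts that $f_{C*}\omega_{X_C/C}^{\otimes m}$ is weakly positive on $C$. For a locally free sheaf on a smooth projective curve, weak positivity already forces nefness: if some quotient line bundle $L$ had $\deg L<0$, then for any ample $H$ on $C$ and any $\beta\geq 1$ the sheaf $S^{\alpha\beta}(f_{C*}\omega_{X_C/C}^{\otimes m})\otimes H^{\otimes\beta}$ would have $L^{\otimes\alpha\beta}\otimes H^{\otimes\beta}$ as a quotient line bundle, of degree $\alpha\beta\deg L+\beta\deg H<0$ for $\alpha\gg 0$, contradicting the generic global generation property that defines weak positivity.

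The genuine obstacle is the first step: the only deep input is Siu's invariance of plurigenera, needed to upgrade the a priori upper semicontinuity of $h^0(X_y,\omega_{X_y}^{\otimes m})$ to actual constancy, since no direct vanishing argument gives local freeness of $f_*\omega_{X/Y}^{\otimes m}$ for $m\ge 2$ at this level of generality. Everything else in the argument is formal once Viehweg's weak positivity is taken as given.
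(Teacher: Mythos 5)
Your proof is correct, and the local freeness half is exactly the paper's argument: Siu's invariance of plurigenera makes $h^0(X_y,\omega_{X_y}^{\otimes m})$ constant, and Grauert's theorem then gives local freeness together with compatibility with base change. For the nefness half, however, you take a genuinely different route. The paper never reduces to curves: it forms the $s$-fold fiber product $f^s\colon X^s\to Y$, proves $f^s_*\omega^{\otimes m}_{X^s/Y}\simeq \bigotimes^s f_*\omega^{\otimes m}_{X/Y}$ by flat base change and the projection formula, and then applies the Popa--Schnell effective freeness (Theorem \ref{thm5.1}) to twist all these tensor powers by one fixed line bundle $\mathcal M=\omega_Y^{\otimes m}\otimes\mathcal L^{\otimes m(\dim Y+1)}$ independent of $s$; Lemma \ref{lem5.3} then yields nefness directly on $Y$. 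You instead pull back to an arbitrary curve $g\colon C\to Y$, identify $g^*f_*\omega^{\otimes m}_{X/Y}$ with $f_{C*}\omega^{\otimes m}_{X_C/C}$ via the base-change compatibility, and extract nefness from Viehweg's weak positivity; your degree computation showing that a weakly positive locally free sheaf on a curve has no negative-degree quotient line bundle is sound, and indeed the paper records precisely this observation (weak positivity implies nefness over a curve base) in the remark following Theorem \ref{thm1.6}. The trade-off: your argument is shorter once \cite[Theorem III]{viehweg} is taken as a black box, but that theorem is proved via the covering trick together with the Hodge-theoretic Theorem \ref{thm1.3}, so you reintroduce exactly the input that the paper's Section \ref{sec5} is structured to avoid; the paper's route also has the advantage that the same fiber-product-plus-effective-freeness mechanism carries over verbatim to the weakly semistable setting needed for Theorem \ref{thm1.6}. (Both arguments, yours and the paper's, implicitly use connected fibers when invoking the respective positivity theorems, so that shared point is not held against you.)
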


We give a proof of Theorem \ref{thm1.4} based on Siu's invariance of 
plurigenera (see \cite[Corollary 0.2]{siu} and \cite[Theorem 1]{paun}) 
and the effective freeness in \cite{popa-schnell} 
(see \cite[Theorem 1.4]{popa-schnell}). 
Note that Siu's invariance of plurigenera is not Hodge theoretic. 
It is a very 
clever application of the Ohsawa--Takegoshi $L^2$ extension theorem. 
We have no Hodge theoretic characterization of $f_*\omega^{\otimes m}_{X/Y}$ 
in Theorem \ref{thm1.4} when $m\geq 2$. 
By Theorem \ref{thm1.3} and Theorem \ref{thm1.4}, 
it is natural to consider: 

\begin{conj}[Semipositivity of direct images of relative pluricanonical 
bundles]\label{conj1.5}
Let $f:X\to Y$ be a surjective morphism between 
smooth projective varieties with connected fibers. 
Then there exists a generically finite morphism 
$\tau:Y'\to Y$ from a smooth projective variety $Y'$ 
with the following property. 
Let $X'$ be any resolution of the main component of $X\times _Y Y'$ 
sitting in the following commutative diagram: 
$$
\xymatrix{X' \ar[r]\ar[d]_{f'}& X\ar[d]^f\\
Y'\ar[r]_{\tau} &Y. 
}
$$ 
Then $f'_*\omega^{\otimes m}_{X'/Y'}$ is a nef 
locally free sheaf for every positive integer $m$. 
\end{conj}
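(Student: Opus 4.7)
The plan is to reduce Conjecture \ref{conj1.5} to the $m=1$ case, which is Theorem \ref{thm1.3}, by combining weak semistable reduction, the existence of a relative good minimal model (which, as the keywords and abstract suggest, I will assume on the geometric generic fiber of $f$), the canonical bundle formula for the relative Iitaka fibration, and Popa--Schnell effective freeness. First I would choose a generically finite $\tau : Y' \to Y$ so that, after passing to a suitable resolution $X'$ of the main component of $X \times_Y Y'$, the induced morphism $f' : X' \to Y'$ is weakly semistable in the sense of Abramovich--Karu. Assuming the geometric generic fiber of $f$ admits a good minimal model, the geometric generic fiber of $f'$ does so as well, and by standard techniques of the relative minimal model program one obtains a relative good minimal model $\mu : X' \dashrightarrow \widetilde{X}$ over $Y'$ on which $K_{\widetilde{X}/Y'}$ is $\widetilde f$-semi-ample. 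Crucially $\widetilde{f}_* \omega_{\widetilde{X}/Y'}^{\otimes m} \cong f'_* \omega_{X'/Y'}^{\otimes m}$ for every $m > 0$, so it suffices to prove the statement on $\widetilde X$.

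Next, let $\phi : \widetilde{X} \to Z$ over $Y'$ be the relative Iitaka fibration associated to $K_{\widetilde{X}/Y'}$, so that $K_{\widetilde{X}/Y'} \sim_{\mathbb{Q}} \phi^* H$ for some $g$-ample $\mathbb{Q}$-divisor $H$ on $Z$, where $g : Z \to Y'$. I would then invoke the Fujino--Mori/Ambro canonical bundle formula for $\phi$, producing a decomposition $K_{\widetilde{X}/Y'} \sim_{\mathbb{Q}} \phi^* ( K_{Z/Y'} + B_Z + M_Z )$ with $B_Z$ the discriminant part and $M_Z$ the moduli part. After a further generically finite base change (absorbed into $\tau$), the moduli part $M_Z$ is nef by a Hodge theoretic argument extending Theorem \ref{thm1.3}, while Theorem \ref{thm1.3} applied to $g : Z \to Y'$ controls $g_* \omega_{Z/Y'}$. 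Pushing $\omega_{\widetilde{X}/Y'}^{\otimes m}$ forward through $\phi$ yields a sheaf built from $m K_{Z/Y'}$, $m M_Z$, and an effective $m B_Z$ piece, whose further pushforward to $Y'$ should inherit nefness from the nefness of $M_Z$, of $g_* \omega_{Z/Y'}$, and from the $g$-ampleness of $H$.

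This only gives nefness; to upgrade to local freeness I would appeal to the Popa--Schnell effective generation theorem, which produces a uniform Castelnuovo--Mumford type regularity bound for $\widetilde{f}_* \omega_{\widetilde{X}/Y'}^{\otimes m}$, combined with flat base change to ensure constancy of rank and hence local freeness on $Y'$. The principal obstacles I anticipate are (i) extending a good minimal model of the geometric generic fiber to an honest relative good minimal model over all of $Y'$, rather than merely over a nonempty open subset, which requires care with the singularities produced by the minimal model program, and (ii) establishing the nefness after base change of the moduli part $M_Z$, which is the Hodge theoretic heart of the argument and is the direct analogue here of Theorem \ref{thm1.3}. Consolidating the several generically finite base changes needed along the way into a single $\tau : Y' \to Y$ at the end is routine.
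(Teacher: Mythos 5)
First, note that the statement you are addressing is stated in the paper as a conjecture and is not proved there unconditionally; by assuming a good minimal model for the geometric generic fiber you are in effect attempting the paper's Theorem \ref{thm1.6}, which is the honest target. Your opening moves --- weak semistable reduction after a generically finite base change, followed by passage to a relative good minimal model $\widetilde f:\widetilde X\to Y'$ with $\widetilde f_*\omega^{\otimes m}_{\widetilde X/Y'}\simeq f'_*\omega^{\otimes m}_{X'/Y'}$ --- coincide with the paper's. After that, however, there are two genuine gaps. The more serious one is local freeness. You write that Popa--Schnell effective generation ``combined with flat base change to ensure constancy of rank'' gives local freeness, but this is circular: cohomology and base change converts constancy of $\dim H^0(\widetilde X_y,\mathcal O_{\widetilde X}(mK_{\widetilde X/Y'})|_{\widetilde X_y})$ into local freeness, it does not produce that constancy, and Popa--Schnell says nothing about it. The whole point of the paper's Section \ref{sec4} is to establish this constancy: for an arbitrary point $P\in Y'$ one cuts $Y'$ by general very ample divisors to get a smooth curve $C\ni P$, shows via \cite[Lemma 6.2]{abramovich-karu}, inversion of adjunction, and the negativity lemma that $\widetilde X_C$ is normal with only canonical singularities (hence $\widetilde f$ is equidimensional and flat, and $\mathcal O_{\widetilde X}(mK_{\widetilde X})$ is flat over $Y'$ by Lemma \ref{lem2.4}), and then applies Nakayama's Lemma \ref{lem4.1} over $C$ together with the base change theorems. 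None of this is in your sketch, and it is the key technical content of the paper (Theorem \ref{thm4.2}).

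The second gap is in your nefness argument. Your route through the relative Iitaka fibration and the Fujino--Mori/Ambro canonical bundle formula is genuinely different from the paper's, but as written it does not close: nefness of the moduli part $M_Z$ and Theorem \ref{thm1.3} applied to $g:Z\to Y'$ are statements about a divisor on $Z$ and about $g_*\omega_{Z/Y'}$, and you offer no argument for why the pushforward to $Y'$ of the sheaf built from $mK_{Z/Y'}+mM_Z+mB_Z$ ``should inherit nefness''; the effective discriminant part $B_Z$ in particular is not controlled, and pushing a pluricanonical-type bundle through $\phi$ and then through $g$ is exactly the kind of problem the conjecture itself poses. The paper avoids all of this by Viehweg's fiber product trick: for weakly semistable $f$ the $s$-fold fiber product $X^s$ has only canonical Gorenstein singularities, $f^s_*\omega^{\otimes m}_{X^s/Y}\simeq\bigotimes^s f_*\omega^{\otimes m}_{X/Y}$, and Popa--Schnell gives global generation of $\bigotimes^s f_*\omega^{\otimes m}_{X/Y}\otimes\mathcal M$ with a twist $\mathcal M$ independent of $s$, whence nefness by Lemma \ref{lem5.3}. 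This is elementary once local freeness is known, uses no Hodge theory, and is where the Popa--Schnell input actually belongs --- not in the local freeness step.
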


Conjecture \ref{conj1.5} can be seen as a correct formulation of 
Fujita's very naive conjecture:~\cite[Conjecture Wa$_m$]{fujita}. 
Note that \cite{fujita} contains 17 conjectures and 
that \lq\lq Wa\rq\rq\ means 13th in \cite{fujita}.  

The main purpose of this paper is to prove: 

\begin{thm}[Main theorem]\label{thm1.6}
Let $f:X\to Y$ be a surjective morphism between 
smooth projective varieties with connected fibers. 
Assume that the geometric generic fiber $X_{\overline \eta}$ of $f:X\to Y$ 
has a good minimal model. 
Then there exists a generically finite morphism 
$\tau:Y'\to Y$ from a smooth projective variety $Y'$ 
with the following property. 
Let $X'$ be any resolution of the main component of $X\times _Y Y'$ 
sitting in the following commutative diagram:  
$$
\xymatrix{X' \ar[r]\ar[d]_{f'}& X\ar[d]^f\\
Y'\ar[r]_{\tau} &Y. 
}
$$ 
Then $f'_*\omega^{\otimes m}_{X'/Y'}$ is a nef 
locally free sheaf for every positive integer $m$. 
\end{thm}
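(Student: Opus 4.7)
The plan is to use the relative good minimal model program to reduce to a setting where the relative canonical divisor is semiample over $Y'$, and then combine Theorem~\ref{thm1.3} with the Popa--Schnell effective freeness theorem to extract nefness. The first step is to apply the weak semistable reduction of Abramovich--Karu: after pulling back by some generically finite $\tau: Y'\to Y$ and performing birational modifications, we may assume that $f': X' \to Y'$ is a flat, equidimensional morphism with reduced fibers whose total space has only toroidal singularities. This puts us in a setting well adapted to the relative MMP and ensures that pushforwards of the relative dualizing sheaf behave well.

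Next, using the hypothesis that the geometric generic fiber $X_{\bar\eta}$ has a good minimal model, I invoke the results on existence of relative good minimal models (in the spirit of Hacon--Xu, Birkar--Hacon, and Lai) to construct a relative good minimal model $g: Z \to Y'$, birational to $f'$ over $Y'$, such that $K_{Z/Y'}$ is $g$-semiample. Since $Z$ has canonical singularities and is birational to $X'$ over $Y'$, one obtains the key identification
\[
f'_*\omega_{X'/Y'}^{\otimes m} \;\cong\; g_*\omega_{Z/Y'}^{[m]}
\]
for every positive integer $m$, reducing the entire problem to the minimal model. Local freeness then follows: since $mK_{Z/Y'}$ is $g$-generated for $m$ sufficiently divisible, cohomology and base change together with Siu's invariance of plurigenera (applied along the smooth locus of $g$ and propagated via the good minimal model structure) imply that $g_*\omega_{Z/Y'}^{[m]}$ is locally free and commutes with arbitrary base change, for every positive $m$.

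For nefness, I would factor $g$ through the relative Iitaka fibration $a: Z \to W$ with $W\to Y'$ the relative canonical model, so that $K_{Z/Y'} = a^*L$ for a $\mathbb{Q}$-divisor $L$ which is ample over $Y'$. This reduces the computation of $g_*\omega_{Z/Y'}^{[m]}$ to direct images of a relatively ample bundle from $W$ to $Y'$. I would then apply Viehweg's fiber product trick: take $s$-fold fiber products $Z^{(s)} := Z \times_{Y'} \cdots \times_{Y'} Z$, resolve, and use Theorem~\ref{thm1.3} (after further base change) to deduce that symmetric powers of $g_*\omega_{Z/Y'}^{[m]}$ become nef up to birational adjustment, hence that $g_*\omega_{Z/Y'}^{[m]}$ is weakly positive in the sense of Viehweg. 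Finally, the effective freeness of Popa--Schnell, which bounds the twist by an ample line bundle needed to make $g_*\omega_{Z/Y'}^{[m]}$ globally generated, upgrades weak positivity to genuine nefness in the usual way.

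The main obstacle I expect is the second step: globalizing the assumption that $X_{\bar\eta}$ has a good minimal model to produce a relative good minimal model $Z \to Y'$ with $K_{Z/Y'}$ semiample \emph{over the whole of} $Y'$, not merely fiberwise or generically. This requires careful use of the relative MMP in families with non-terminal singularities arising from weak semistable reduction, and it is where the hypothesis of the theorem is used essentially. A secondary technical difficulty is the clean identification of $f'_*\omega_{X'/Y'}^{\otimes m}$ with $g_*\omega_{Z/Y'}^{[m]}$ through the birational map $X'\dashrightarrow Z$, which depends on the canonical (in particular rational) nature of the singularities of $Z$.
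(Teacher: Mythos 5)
Your overall skeleton (weak semistable reduction, then a relative good minimal model $\widetilde f\colon \widetilde X\to Y'$ with the identification $f'_*\omega_{X'/Y'}^{\otimes m}\simeq \widetilde f_*\mathcal O_{\widetilde X}(mK_{\widetilde X/Y'})$) matches the paper, but the step where you actually derive local freeness has a genuine gap. You appeal to ``cohomology and base change together with Siu's invariance of plurigenera applied along the smooth locus of $g$ and propagated via the good minimal model structure.'' Siu's theorem is a statement about \emph{smooth} families, and the entire difficulty sits at the points of $Y'$ over which $\widetilde f$ degenerates; relative semiampleness of $K_{\widetilde X/Y'}$ alone does not give constancy of $h^0(\widetilde X_y,\mathcal O(mK_{\widetilde X/Y'})|_{\widetilde X_y})$, which is precisely what cohomology and base change needs as input. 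The paper's key mechanism, which your proposal is missing, is the curve-cutting argument: for an \emph{arbitrary} point $P\in Y'$ one takes $C=H_1\cap\cdots\cap H_{n-1}$ a general complete-intersection curve through $P$; weak semistability is preserved under this base change (Abramovich--Karu), so $X^\dag_C$ has rational Gorenstein singularities, and then inversion of adjunction plus the negativity lemma show that $\widetilde X_C$ is normal with only canonical singularities. This yields equidimensionality and flatness of $\widetilde f$ and flatness of $\mathcal O_{\widetilde X}(mK_{\widetilde X})$ (via Lemma \ref{lem2.4}), and the invariance of plurigenera along $C$ then comes from Nakayama's vanishing-theoretic Lemma \ref{lem4.1} (Kawamata--Viehweg, not $L^2$ methods). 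Without this step your argument does not close.

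The nefness part also takes a route that would not work as stated. Weak positivity in Viehweg's sense is a generic notion and does not imply nefness, so ``upgrading weak positivity to genuine nefness'' via Popa--Schnell is not a standard implication; moreover Theorem \ref{thm1.3} concerns $\omega_{X'/Y'}$ only and plays no role in the paper's nefness proof, and the detour through the relative Iitaka fibration is unnecessary. There is also a concrete technical problem with forming fiber products of the minimal model: $Z\times_{Y'}\cdots\times_{Y'}Z$ need not be normal or have canonical singularities, which is why the paper takes the $s$-fold fiber products of the \emph{weakly semistable} family $X^\dag\to Y'$ (these are local-analytically toric, hence normal Gorenstein with rational singularities). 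The correct argument is then: $f^s_*\omega^{\otimes m}_{X^s/Y}\simeq\bigotimes^s f_*\omega^{\otimes m}_{X/Y}$ (using the already-established local freeness), Popa--Schnell's effective global generation (Theorem \ref{thm5.1}, extended to rational Gorenstein total spaces) with a twist $\mathcal M$ \emph{independent of $s$}, and the elementary Lemma \ref{lem5.3} stating that $\mathcal E$ is nef once $\mathcal E^{\otimes s}\otimes\mathcal M$ is globally generated for all $s$.
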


We note that $X_{\overline \eta}$ has a good minimal model 
if $\dim X_{\overline \eta}-\kappa (X_{\overline \eta})\leq 3$ 
(see \cite{bchm}, \cite{lai}, and Theorem \ref{thm3.8}). 
In particular, $X_{\overline \eta}$ has 
a good minimal model if $X_{\overline \eta}$ is of general type. 
Theorem \ref{thm1.6} reduces Conjecture \ref{conj1.5} to 
the good minimal model conjecture for geometric 
generic fibers. 
Of course, it is highly desirable to prove Conjecture \ref{conj1.5} 
without any extra assumptions. 
Our proof of 
Theorem \ref{thm1.6} is geometric and 
does not use the theory of variations of Hodge structure. 
We do not even use $L^2$ methods in the proof of Theorem \ref{thm1.6}. 
Our proof of Theorem \ref{thm1.6} in this paper is minimal model theoretic. 
Anyway, Theorem \ref{thm1.4} and Theorem \ref{thm1.6} strongly support 
Conjecture \ref{conj1.5}. 

\begin{rem}
If $Y$ is a curve in Conjecture \ref{conj1.5}, 
then Kawamata proved that 
$f_*\omega^{\otimes m}_{X/Y}$ is a nef locally free sheaf 
for every positive integer $m$  
(see \cite[Theorem 1]{kawamata2}). 
We also note that Viehweg's weak positivity 
of $f_*\omega^{\otimes m}_{X/Y}$ (see \cite[Theorem III]{viehweg}) 
implies that $f_*\omega^{\otimes m}_{X/Y}$ is 
nef when $Y$ is a curve. 
\end{rem}

We sketch the proof of Theorem \ref{thm1.6} for the reader's 
convenience. 

\begin{say}[Outline of the proof of Theorem \ref{thm1.6}]
We take a weak semistable reduction 
$f^\dag: X^\dag\to Y'$ in the sense of Abramovich--Karu. 
Then we take a good minimal model $\widetilde f:\widetilde X\to Y'$ of $f^\dag: 
X^\dag\to Y'$. 
Let $P$ be an arbitrary point of $Y'$ and let $C$ be a smooth 
curve on $Y'$ such that 
$P\in C$ and that $C=H_1\cap H_2\cap  \cdots \cap H_{\dim Y'-1}$, where 
$H_i$ is a general very ample 
Cartier divisor for every $i$. 
Then we can prove that 
$\widetilde X_C=\widetilde X\times _{Y'} C$ is a normal 
variety with only canonical singularities. 
Therefore, we obtain that $\widetilde f$ is flat and $\dim H^0(\widetilde X_y, 
\mathcal O_{\widetilde X}(mK_{\widetilde X/Y'})|_{\widetilde X_y})$ is independent of 
$y\in Y'$ for every positive integer $m$. 
This implies that $f'_*\omega^{\otimes m}_{X'/Y'}$ is locally free 
for every positive integer $m$. 
Once we establish 
the local freeness of $f'_*\omega^{\otimes m}_{X'/Y'}$, 
the nefness 
of $f'_*\omega^{\otimes m}_{X'/Y'}$ easily follows from 
the effective freeness by Popa--Schnell and 
Viehweg's fiber product trick. 
As explained above, a key point of 
the proof of Theorem \ref{thm1.6} is to construct 
a good minimal model 
$\widetilde f: \widetilde X\to Y'$ which behaves well 
under the base change by $C\hookrightarrow Y'$. 
Our proof of Theorem \ref{thm1.6} is not Hodge theoretic.  
\end{say}

After the author circulated a preliminary version of 
this paper, Mihai P\u aun and Shigeharu Takayama 
informed him of their new preprint \cite{paun-takayama}, 
where they prove various semipositivity theorems by $L^2$ methods. 
Their approach is completely different from ours. 
For the details, we recommend the reader to see \cite{paun-takayama} 
(see also Takayama's more recent results in \cite{takayama}). 

We summarize the contents of this paper. 
In Section \ref{sec2}, we collect some basic definitions and 
results for the reader's convenience. 
In Section \ref{sec3}, we discuss the relationship 
between relative good minimal models and good minimal models of 
fibers. 
In Section \ref{sec4}, we prove the local freeness of 
direct images of relative 
pluricanonical bundles in Theorem \ref{thm1.6} after taking 
a weak semistable reduction. In order to prove the local freeness, 
we take a relative good minimal model of the weak semistable 
reduction. Therefore, we need the assumption that 
the geometric generic fiber has a good minimal model. 
In Section \ref{sec5}, we prove the numerical 
semipositivity (nefness) in 
our main theorem:~Theorem \ref{thm1.6}. 
The proof is an easy application of the effective freeness obtained 
by Popa--Schnell (see \cite[Theorem 1.4]{popa-schnell}) and 
Viehweg's fiber product trick (see \cite[(3.4)]{viehweg}). 

\begin{ack}
The author was partially supported by Grant-in-Aid for 
Young Scientists (A) 24684002 from JSPS. 
He thanks Yoshinori Gongyo and Professors 
Shigefumi Mori and Shigeharu Takayama for useful comments. 
He also thanks Professors Mihai P\u aun and Shigeharu Takayama 
for sending him their new preprint \cite{paun-takayama}. 
Finally, he thanks Jinsong Xu for pointing out a mistake. 
\end{ack}

We will work over $\mathbb C$, the complex number field, throughout this paper. 
We will freely use the standard notations and results of 
the minimal model program as in 
\cite{kollar-mori}, \cite{fujino-fund} and \cite{fujino-foundation}. 
We recommend the reader to see \cite[\S 5]{mori} and \cite[Section 5]{fujino-remarks} 
for the details of Theorem \ref{thm1.3} and 
various related topics. 

\section{Preliminaries}\label{sec2}

In this section, we collect some basic notations and 
results for the reader's convenience. For the details, see \cite{kollar-mori}, 
\cite{fujino-fund}, and \cite{fujino-foundation}. 

\begin{say}[Dualizing sheaves and canonical divisors]\label{say2.1}
Let $X$ be a normal quasi-projective variety. 
Then we put $\omega_X=\mathcal H ^{-\dim X}(\omega^\bullet _X)$, 
where $\omega^\bullet _X$ is the dualizing complex of $X$, 
and call $\omega_X$ the {\em{dualizing sheaf}} of $X$. 
We put $\omega_X\simeq \mathcal O_X(K_X)$ and call $K_X$ the 
{\em{canonical divisor}} of $X$. 
Note that $K_X$ is a well-defined Weil divisor on $X$ up to the linear 
equivalence. 
Let $f:X\to Y$ be a morphism between Gorenstein varieties. 
Then we put $\omega_{X/Y}=\omega_X\otimes f^*\omega^{\otimes -1}_Y$ 
and call it the relative canonical bundle of $f:X\to Y$. 
\end{say}

\begin{say}[Singularities of pairs]\label{say2.2} 
Let $X$ be a normal variety and let $\Delta$ be an effective $\mathbb Q$-divisor 
on $X$ such that $K_X+\Delta$ is $\mathbb Q$-Cartier. 
Let $f:Y\to X$ be a resolution of singularities. 
We write $$K_Y=f^*(K_X+\Delta)+\sum _i a_i E_i$$ and 
$a(E_i, X, \Delta)=a_i$. 
Note that the {\em{discrepancy}} $a(E, X, \Delta)\in \mathbb Q$ can be 
defined for every prime divisor $E$ {\em{over}} $X$. 
If $a(E, X, \Delta)>-1$ for every exceptional divisor $E$ over $X$, 
then $(X, \Delta)$ is called a {\em{plt}} pair. 
If $a(E, X, \Delta)>-1$ for every divisor $E$ over $X$, then $(X, \Delta)$ is 
called a {\em{klt}} pair. 
In this paper, if $\Delta=0$ and $a(E, X, 0)\geq 0$ for 
every divisor $E$ over $X$, then we say that $X$ has only {\em{canonical 
singularities}}. 
\end{say}

\begin{rem}\label{rem2.3} 
Although $\mathbb R$-divisors play crucial roles in the 
recent developments of the minimal model program, 
we do not use $\mathbb R$-divisors in this paper. 
\end{rem}

We need the following lemma in the proof of 
the local freeness in the main theorem:~Theorem \ref{thm1.6}. 

\begin{lem}\label{lem2.4}
Let $X$ be a normal variety with only canonical singularities. 
Then $\mathcal O_X(mK_X)$ is Cohen--Macaulay for every integer $m$. 
\end{lem}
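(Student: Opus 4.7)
The Cohen--Macaulay property is local on $X$, so I work in an analytic (or \'etale) neighborhood of an arbitrary closed point $x\in X$. Since $X$ is canonical, $K_X$ is $\mathbb Q$-Cartier; let $N$ be a positive integer such that $NK_X$ is Cartier on a neighborhood $U$ of $x$. The natural tool is the index-one cyclic cover $\pi\colon \widetilde U\to U$ of degree $N$ associated to a trivialization $\mathcal O_U(NK_X)\cong \mathcal O_U$, namely
$$
\widetilde U \;=\; \operatorname{Spec}_U \Bigl(\bigoplus_{i=0}^{N-1} \mathcal O_U(-iK_X)\Bigr),
$$
with algebra structure supplied by the chosen trivialization.

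The map $\pi$ is finite and \'etale in codimension one, and $K_{\widetilde U}=\pi^{*}K_X$ is Cartier. A standard discrepancy computation for covers \'etale in codimension one shows that $\widetilde U$ inherits canonical singularities; combined with the Cartier property of $K_{\widetilde U}$, this means $\widetilde U$ is Gorenstein canonical, hence has only rational singularities by Elkik's theorem. In particular $\widetilde U$ is Cohen--Macaulay, so by the finiteness of $\pi$ the sheaf $\pi_*\mathcal O_{\widetilde U}$ is Cohen--Macaulay as an $\mathcal O_U$-module. The $\mu_N$-Galois action decomposes
$$
\pi_*\mathcal O_{\widetilde U} \;=\; \bigoplus_{i=0}^{N-1} \mathcal O_U(-iK_X),
$$
and since a direct summand of a Cohen--Macaulay sheaf is Cohen--Macaulay, each $\mathcal O_U(-iK_X)$ with $0\le i\le N-1$ is Cohen--Macaulay on $U$.

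For arbitrary $m\in\mathbb Z$, write $-m = qN + r$ with $0\le r<N$. Then $\mathcal O_U(mK_X) \cong \mathcal O_U(-rK_X)\otimes \mathcal O_U(-qNK_X)$, and since $-qNK_X$ is Cartier the second factor is invertible; therefore $\mathcal O_U(mK_X)$ inherits Cohen--Macaulayness from $\mathcal O_U(-rK_X)$, which establishes the lemma. The main technical point is the assertion that the index-one cover of a canonical singularity is again canonical; this is a standard but nontrivial application of the ramification formula for finite covers \'etale in codimension one. Once that is granted, the remainder reduces to Elkik's rationality theorem and the elementary facts that direct summands of Cohen--Macaulay sheaves, and twists by line bundles, remain Cohen--Macaulay.
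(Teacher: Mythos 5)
Your proof is correct and follows essentially the same route as the paper: pass to the index-one cyclic cover, observe it is canonical Gorenstein hence Cohen--Macaulay, push forward along the finite map, extract the eigensheaf summands, and reduce general $m$ by periodicity modulo the Cartier multiple of $K_X$. The only cosmetic difference is that you decompose $\pi_*\mathcal O_{\widetilde U}$ into the summands $\mathcal O_U(-iK_X)$, while the paper decomposes $\pi_*\mathcal O_{\widetilde X}(K_{\widetilde X})$ into the summands $\mathcal O_X(iK_X)$; both rest on the same facts and yield the same conclusion.
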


\begin{proof}
We note that $X$ has only rational singularities when $X$ is canonical. 
Let $r$ be the smallest positive integer such that 
$rK_X$ is Cartier. 
Since the problem is local, we may assume that 
$rK_X\sim 0$ by shrinking $X$. 
If $r=1$, then $\mathcal O_X(mK_X)\simeq \mathcal O_X$ for 
every integer $m$. 
In this case, $\mathcal O_X(mK_X)$ is Cohen--Macaulay for every 
integer $m$ since $X$ has only rational singularities. 
From now on, we assume that 
$r\geq 2$. 
Let $\pi:\widetilde X\to X$ be the index one cover. 
Then we have 
$$
\pi_*\mathcal O_{\widetilde X}(K_{\widetilde X})\simeq 
\bigoplus _{i=1}^r \mathcal O_X(iK_X). 
$$ 
Since $\widetilde X$ has only canonical singularities and 
$K_{\widetilde X}$ is Cartier, 
$\mathcal O_{\widetilde X}(K_{\widetilde X})$ is Cohen--Macaulay. 
Since $\pi$ is finite, 
$\mathcal O_X(iK_X)$ is Cohen--Macaulay for $1\leq i\leq r$. 
By $rK_X\sim 0$, we obtain that 
$\mathcal O_X(mK_X)$ is Cohen--Macaulay for 
every integer $m$. 
\end{proof}

\section{Relative good minimal models}\label{sec3}
In this section, we discuss the relationship between 
relative good minimal models and 
good minimal models of fibers for the reader's convenience. 
The results in this section are more or less known to the experts 
although they were not stated explicitly in the literature. 

Let us recall the definition of sufficiently general fibers. 

\begin{defn}[Sufficiently general fibers]\label{def3.1}
Let $f:X\to Y$ be a morphism between algebraic varieties. 
Then a {\em{sufficiently general fiber}} $F$ of $f:X\to Y$ means that 
$F=f^{-1}(y)$ where $y$ is any point contained in a countable 
intersection of Zariski dense open subsets of $Y$. 
\end{defn}

A sufficiently general fiber is sometimes called a {\em{very general fiber}} 
in the literature. 

\begin{defn}[Good minimal models]\label{def3.2}
Let $f:X\to Y$ be a projective morphism between normal 
quasi-projective 
varieties. 
Let $\Delta$ be an effective $\mathbb Q$-divisor on $X$ such that 
$(X, \Delta)$ is klt. 
A pair $(X', \Delta')$ sitting in a diagram 
$$
\xymatrix{
X\ar[dr]_{f}\ar@{-->}[rr]^\phi&& X'\ar[dl]^{f'}\\
&Y&
}
$$
is called a {\em{minimal model of $(X, \Delta)$ over $Y$}} if 
\begin{itemize}
\item[(i)] $X'$ is $\mathbb Q$-factorial, 
\item[(ii)] $f'$ is projective, 
\item[(iii)] $\phi$ is birational and $\phi^{-1}$ has no 
exceptional divisors, 
\item[(iv)] $\phi_*\Delta=\Delta'$, 
\item[(v)] $K_{X'}+\Delta'$ is $f'$-nef, and 
\item[(vi)] $a(E, X, \Delta)<a(E, X', \Delta')$ for every $\phi$-exceptional 
divisor $E\subset X$. 
\end{itemize}
Furthermore, if $K_{X'}+\Delta'$ is $f'$-semi-ample, 
then $(X', \Delta')$ is called a {\em{good minimal model 
of $(X, \Delta)$ over $Y$}}. 
When $Y$ is a point, we usually omit \lq\lq over $Y$\rq\rq \ in the above definitions. 
We sometimes simply say that $(X', \Delta')$ is a {\em{relative 
$($good$)$ minimal model}} of $(X, \Delta)$. 
\end{defn}

Although Theorem \ref{thm3.3} holds for klt pairs, we state it for 
varieties with only canonical singularities for simplicity. 
Theorem \ref{thm3.3} is useful and sufficient for our application in 
this paper. 

\begin{thm}\label{thm3.3} 
Let $f:X\to Y$ be a projective surjective morphism 
from a normal quasi-projective variety $X$ with 
only canonical singularities to 
a normal quasi-projective variety $Y$ with connected fibers. 
Then the following conditions are equivalent. 
\begin{itemize}
\item[(i)] $X$ has a good minimal model over $Y$. 
\item[(ii)] $X_{\overline \eta}$ has a good minimal model, 
where $X_{\overline \eta}$ is the geometric generic fiber of $f:X\to Y$. 
\item[(iii)] $F$ has a good minimal model, where $F$ is 
a sufficiently general fiber of $f:X\to Y$. 
\item[(iv)] $G$ has a good minimal model, where $G$ is a 
general fiber of $f:X\to Y$. 
\end{itemize}
\end{thm}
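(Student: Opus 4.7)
The plan is to establish the cycle $(i) \Rightarrow (iv) \Rightarrow (iii) \Rightarrow (ii) \Rightarrow (i)$, with the first three implications being relatively formal and the last being the substantive content. The implication $(iv) \Rightarrow (iii)$ is tautological, since a Zariski-dense open subset of $Y$ is itself a countable intersection of dense open sets, so every general fiber is automatically sufficiently general.

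For $(i) \Rightarrow (iv)$, let $g \colon X' \to Y$ be a relative good minimal model of $X$ over $Y$. For $y$ in a dense open subset of $Y$, I would verify that $G' = g^{-1}(y)$ is a good minimal model of $G = f^{-1}(y)$: the fiber $G'$ has canonical singularities (since $X'$ is canonical and canonicity of the total space restricts to a general fiber, which may be checked by iteratively slicing $Y$ with general very ample divisors and using standard Bertini-type arguments); the induced birational map $G \dashrightarrow G'$ extracts no divisor from $G'$ (inherited from the corresponding property of $X \dashrightarrow X'$ over $Y$ together with generic flatness); and $K_{G'}$ is semi-ample (the restriction of the $g$-semi-ample divisor $K_{X'/Y}$ to a general fiber is semi-ample).

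For $(iii) \Rightarrow (ii)$, I would use a standard spreading-out argument. Fix a sufficiently general $y \in Y$ for which $X_y$ admits a good minimal model $X'_y$. After shrinking $Y$ to a suitable Zariski neighborhood of $y$ and possibly passing to a generically finite cover of that neighborhood, the birational model and the semi-ample structure on $X'_y$ extend to a family over the neighborhood, whose geometric generic fiber is a good minimal model of $X_{\overline{\eta}}$. Sufficient generality of $y$ ensures that $y$ avoids the countably many proper subvarieties arising in this spreading-out process as obstructions.

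The main obstacle is the implication $(ii) \Rightarrow (i)$, where from the mere existence of a good minimal model on the geometric generic fiber one must construct a relative good minimal model over the entire base $Y$. This is precisely Lai's theorem in \cite{lai}: if $(X, \Delta)$ is a klt pair, $f \colon X \to Y$ is projective surjective to a normal quasi-projective variety, and $X_{\overline{\eta}}$ admits a good minimal model, then $(X, \Delta)$ has a relative good minimal model over $Y$. Since $X$ is canonical, the pair $(X, 0)$ is klt, and we apply Lai's result with $\Delta = 0$ to close the cycle.
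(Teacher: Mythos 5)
Your overall cycle is sound in outline, and three of the four implications match the paper's argument in substance: $(iv)\Rightarrow(iii)$ is indeed immediate, $(i)\Rightarrow(iv)$ is done by restricting the relative good minimal model to a general fiber (though you should also address condition (i) of Definition \ref{def3.2}: a fiber of a $\mathbb Q$-factorial variety need not be $\mathbb Q$-factorial, so one replaces $G'$ by a small projective $\mathbb Q$-factorialization, as the paper does), and the hard implication landing in $(i)$ is correctly outsourced to the literature (the paper invokes \cite[Theorem 2.12]{hacon-xu} and \cite[Theorem 5.1]{birkar}, starting from $(iii)$ rather than $(ii)$, instead of \cite{lai}, but this is the same circle of results).

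The genuine gap is $(iii)\Rightarrow(ii)$. Spreading out goes from the generic fiber to an open subset of the base, not the other way around: starting from a good minimal model of a single closed fiber $X_y$, even a very general one, there is no formal procedure that ``extends the birational model and the semi-ample structure to a family over a neighborhood of $y$.'' Producing such a relative model is precisely the content of the deep relative MMP statement you are ultimately after --- it is essentially the implication $(iii)\Rightarrow(i)$ --- so as written this step either begs the question or silently invokes the hard theorem a second time; there are also no ``countably many proper subvarieties arising in the spreading-out process'' to speak of, because there is no spreading-out process whose input you possess. The paper proves $(ii)\Leftrightarrow(iii)$ by a different route: by Theorem \ref{thm3.4} (Gongyo--Lehmann, Demailly--Hacon--P\u aun) the existence of a good minimal model is equivalent to the numerical condition $\kappa=\kappa_\sigma\geq 0$, and by Lemma \ref{lem3.6} the invariants $\kappa$ and $\kappa_\sigma$ of the geometric generic fiber agree with those of a sufficiently general fiber; together these give Corollary \ref{cor3.7}. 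You could instead repair your cycle by proving $(iii)\Rightarrow(i)$ directly via Hacon--Xu and then deducing $(ii)$ from $(i)$ by restricting the relative good minimal model to the geometric generic fiber. As it stands, the passage from a very general fiber to the geometric generic fiber is unproven.
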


In order to understand Theorem \ref{thm3.3}, we give some supplementary 
results. 

\begin{thm}\label{thm3.4}
Let $(X, \Delta)$ be a projective klt pair such that 
$\Delta$ is a $\mathbb Q$-divisor. 
Then $(X, \Delta)$ has a good minimal model 
if and only if 
$K_X+\Delta$ is pseudo-effective, equivalently, $\kappa _\sigma (X, K_X+\Delta)\geq 0$, 
and 
$$\kappa (X, K_X+\Delta)=\kappa_{\sigma}(X, K_X+\Delta),$$ 
where $\kappa_\sigma$ denotes Nakayama's numerical Kodaira dimension 
and $\kappa$ denotes Iitaka's $D$-dimension.  
\end{thm}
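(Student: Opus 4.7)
The plan is to establish the two directions separately: the ``only if'' direction is essentially formal from the definitions, while the ``if'' direction rests on deep existence results in the minimal model program.

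For the ``only if'' direction, suppose $(X', \Delta')$ is a good minimal model of $(X, \Delta)$. Then $K_{X'} + \Delta'$ is semi-ample, and in particular nef, so
$$\kappa(X', K_{X'}+\Delta') \;=\; \nu(X', K_{X'}+\Delta') \;=\; \kappa_\sigma(X', K_{X'}+\Delta') \;\geq\; 0,$$
where $\nu$ denotes the numerical dimension of a nef $\mathbb{Q}$-divisor (these three invariants coincide for semi-ample $\mathbb{Q}$-divisors). To transfer this equality back to $(X, \Delta)$, I would choose a common log resolution $p\colon W \to X$ and $q\colon W \to X'$. Conditions (iii), (iv), and (vi) in the definition of a minimal model yield
$$p^*(K_X+\Delta) \;=\; q^*(K_{X'}+\Delta') + E,$$
with $E$ effective and $q$-exceptional. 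Since adding an effective $q$-exceptional divisor alters neither $\kappa$ nor $\kappa_\sigma$ (a standard birational invariance statement), both invariants of $K_X + \Delta$ coincide with those of $K_{X'} + \Delta'$, and pseudo-effectivity is inherited in the same manner.

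For the ``if'' direction, assume $K_X + \Delta$ is pseudo-effective and $\kappa(X, K_X+\Delta) = \kappa_\sigma(X, K_X+\Delta)$. The plan is to invoke the known existence theorems for good minimal models in this range. Work of Birkar--Cascini--Hacon--McKernan furnishes a $(K_X+\Delta)$-MMP with scaling for klt pairs, and under the additional hypothesis that Iitaka and numerical Kodaira dimensions coincide, results of Lai (and subsequent refinements) ensure that the output is a good minimal model, not merely a minimal model. Heuristically, the equality $\kappa = \kappa_\sigma$ is exactly the ``weak abundance'' input needed to upgrade nefness of the canonical class on the output to semi-ampleness.

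The main obstacle is clearly the ``if'' direction, which cannot be derived quickly from first principles. The cleanest strategy is to cite Lai's theorem directly rather than to reprove it; the ``only if'' direction, by contrast, is a routine consequence of the birational invariance of $\kappa$ and $\kappa_\sigma$ under the addition of effective exceptional divisors on a common resolution.
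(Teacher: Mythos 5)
Your proposal matches the paper's treatment in substance: the paper offers no proof of its own, simply citing \cite[Theorem 4.3]{gongyo-lehmann} and \cite[Remark 2.6]{dhp}, and your plan likewise defers the hard (``if'') direction to the literature while writing out the routine (``only if'') direction. Your sketch of the ``only if'' direction is correct: semi-ampleness of $K_{X'}+\Delta'$ gives $\kappa=\nu=\kappa_\sigma$ on the minimal model, and the comparison $p^*(K_X+\Delta)=q^*(K_{X'}+\Delta')+E$ with $E\geq 0$ and $q$-exceptional (a consequence of condition (vi) together with the negativity lemma) transfers both invariants and pseudo-effectivity back to $(X,\Delta)$. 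The one point to tighten is your citation for the ``if'' direction: Lai's theorem by itself only reduces the existence of a good minimal model to the generic fiber of the Iitaka fibration; to conclude under the hypothesis $\kappa=\kappa_\sigma$ you additionally need that a very general fiber $F$ of the Iitaka fibration satisfies $\kappa_\sigma(F,(K_X+\Delta)|_F)=0$ (the comparison of numerical dimensions, as in \cite{lehmann}) and that klt pairs with $\kappa_\sigma=0$ have good minimal models (Nakayama, Druel, Gongyo). The packaged statement you want is exactly \cite[Theorem 4.3]{gongyo-lehmann}, which is what the paper cites, so ``cite Lai directly'' is not quite the right move, though the overall strategy is sound.
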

\begin{proof}
For the proof, see \cite[Theorem 4.3]{gongyo-lehmann} or \cite[Remark 2.6]{dhp}. 
\end{proof}

\begin{cor}\label{cor3.5} 
Let $V$ be a smooth projective variety and 
let $V'$ be a normal projective variety with only canonical singularities 
such that $V$ is birationally equivalent to $V'$. 
Then $V$ has a good minimal model if and only if $V'$ 
has a good minimal model. 
\end{cor}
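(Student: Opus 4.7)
The plan is to reduce the statement to Theorem \ref{thm3.4} by verifying that the two numerical conditions appearing there---pseudo-effectivity of the canonical divisor and the equality $\kappa(X, K_X) = \kappa_\sigma(X, K_X)$---are preserved under the birational equivalence between $V$ and $V'$. Since $V$ is smooth and $V'$ has only canonical singularities, both $(V, 0)$ and $(V', 0)$ are klt pairs, so Theorem \ref{thm3.4} is applicable to each.

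First, I would take a smooth projective variety $W$ together with birational morphisms $\pi : W \to V$ and $\pi' : W \to V'$ resolving the given birational map between $V$ and $V'$. Because $V$ is smooth and $V'$ has canonical singularities, one has effective $\pi$-exceptional and $\pi'$-exceptional divisors $E$ and $E'$ on $W$ satisfying
$$K_W = \pi^* K_V + E = (\pi')^* K_{V'} + E'.$$

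Next, the projection formula together with $\pi_* \mathcal O_W(mE) = \mathcal O_V$ (and the analogous identity for $\pi'$), valid for every positive integer $m$, gives $\pi_* \mathcal O_W(mK_W) \simeq \mathcal O_V(mK_V)$ and $\pi'_* \mathcal O_W(mK_W) \simeq \mathcal O_{V'}(mK_{V'})$. Taking global sections yields
$$H^0(V, mK_V) \simeq H^0(W, mK_W) \simeq H^0(V', mK_{V'}),$$
so $\kappa(V, K_V) = \kappa(W, K_W) = \kappa(V', K_{V'})$. Pseudo-effectivity of the three canonical divisors is then equivalent, by pulling back via $\pi$ and $\pi'$ and pushing forward effective divisors along them. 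Finally, I would appeal to Nakayama's birational invariance of $\kappa_\sigma$ under effective exceptional modifications to conclude that $\kappa_\sigma(V, K_V) = \kappa_\sigma(W, K_W) = \kappa_\sigma(V', K_{V'})$.

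The main obstacle I anticipate is this last invariance of $\kappa_\sigma$: it is the least formal of the ingredients and requires a careful appeal to Nakayama's $\sigma$-decomposition theory, showing that adding an effective exceptional divisor to a pull-back does not alter $\kappa_\sigma$. Once that invariance is in hand, combining all of the above equalities with Theorem \ref{thm3.4} applied in turn to $(V, 0)$ and $(V', 0)$ immediately yields the stated equivalence.
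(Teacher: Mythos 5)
Your proposal is correct and follows essentially the same route as the paper: both reduce the statement to Theorem \ref{thm3.4} via the birational invariance of $\kappa$ and $\kappa_\sigma$ for varieties with canonical singularities, which the paper simply asserts and you justify by passing to a common resolution. The appeal to Nakayama's theory for the $\kappa_\sigma$ step is exactly the implicit ingredient in the paper's one-line claim, so there is no gap.
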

\begin{proof}
Note that $\kappa(V, K_V)=\kappa (V', K_{V'})$ and 
$\kappa _{\sigma}(V, K_V)=\kappa_{\sigma}(V', K_{V'})$ hold since 
$V'$ has only canonical singularities. 
Therefore, we see that $\kappa (V, K_V)=\kappa_{\sigma}(V, K_V)$ if and 
only if $\kappa(V', K_{V'})=\kappa_{\sigma}(V', K_{V'})$. 
By Theorem \ref{thm3.4}, we have the desired statement. 
\end{proof}

\begin{lem}\label{lem3.6}
Let $f:X\to Y$ be a projective surjective morphism between normal 
varieties with connected 
fibers and let $\Delta$ be an effective $\mathbb Q$-divisor on $X$ such that 
$(X, \Delta)$ is klt. 
Let $X_{\overline \eta}$ be the geometric generic fiber of $f:X\to Y$. 
We put $\Delta_{\overline \eta}=\Delta|_{X_{\overline \eta}}$. 
Then we have 
$$
\kappa (X_{\overline \eta}, K_{X_{\overline \eta}}+\Delta_{\overline 
\eta})=\kappa (F, K_F+\Delta|_F)
$$ and 
$$
\kappa_\sigma (X_{\overline \eta}, K_{X_{\overline \eta}}+\Delta_{\overline 
\eta})=\kappa _\sigma(F, K_F+\Delta|_F)
$$ 
where $F$ is a sufficiently general fiber of $f:X\to Y$. 
\end{lem}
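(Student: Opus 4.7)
The strategy is to compute both $\kappa$ and $\kappa_\sigma$ from plurigenera, and then to show that the relevant plurigenera are constant on a sufficiently general locus of $Y$, using generic flatness together with flat base change.

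First I would handle the Iitaka dimension. Fix $r \geq 1$ such that $r\Delta$ is integral, and for every $m \geq 1$ consider the coherent sheaf $\mathcal{F}_m := f_*\mathcal{O}_X(mr(K_{X/Y}+\Delta))$ on $Y$. By generic flatness and the theorem on cohomology and base change, there is a Zariski dense open $U_m \subset Y$ on which $\mathcal{F}_m$ is locally free and its formation commutes with arbitrary base change. Hence for every $y \in U_m$,
$$h^0(F_y,\, mr(K_{F_y}+\Delta|_{F_y})) \;=\; \mathrm{rank}\,\mathcal{F}_m \;=\; h^0(X_\eta,\, mr(K_{X_\eta}+\Delta_\eta)) \;=\; h^0(X_{\overline{\eta}},\, mr(K_{X_{\overline{\eta}}}+\Delta_{\overline{\eta}})),$$
where the final equality comes from flat base change along $\overline{K(Y)}/K(Y)$. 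Setting $U := \bigcap_{m \geq 1} U_m$, every $y \in U$ defines a sufficiently general fiber on which all plurigenera of $K_F + \Delta|_F$ agree with those of $K_{X_{\overline{\eta}}} + \Delta_{\overline{\eta}}$, yielding the equality of Iitaka dimensions.

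For the numerical dimension I would use Nakayama's characterization: for any ample divisor $A$ on a projective variety $Z$ and any $\mathbb{Q}$-divisor $L$ with $rL$ integral,
$$\kappa_\sigma(Z, L) \;=\; \max\{\,k \in \mathbb{Z}_{\geq 0} \,:\, \limsup_{m \to \infty}\, m^{-k}\, h^0(Z,\, mrL + A) \;>\; 0\,\}.$$
Choose an ample divisor $A$ on $X$; on a dense open of $Y$, $A|_{F_y}$ is ample on $F_y$, and $A|_{X_{\overline{\eta}}}$ is ample on $X_{\overline{\eta}}$. Applying the previous argument to the coherent sheaves $f_*\mathcal{O}_X(mr(K_{X/Y}+\Delta) + A)$ produces, for each $m$, a dense open $V_m \subset Y$ on which
$$h^0(F_y,\, mr(K_{F_y}+\Delta|_{F_y}) + A|_{F_y}) \;=\; h^0(X_{\overline{\eta}},\, mr(K_{X_{\overline{\eta}}}+\Delta_{\overline{\eta}}) + A|_{X_{\overline{\eta}}}).$$
Taking $V := \bigcap_{m \geq 1} V_m$, the $\limsup$'s defining $\kappa_\sigma$ coincide on $F_y$ (for $y \in V$) and on $X_{\overline{\eta}}$, giving the second equality.

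The main obstacle is purely organizational: to make \emph{sufficiently general} hold uniformly in $m$, one must take a countable intersection of Zariski dense opens, which is precisely what the definition of a sufficiently general fiber accommodates. Once this is in place, the argument reduces to a clean application of generic flatness, cohomology and base change, and Nakayama's description of $\kappa_\sigma$.
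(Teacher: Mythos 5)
Your proposal is correct and is essentially the argument the paper has in mind: the paper's own ``proof'' simply declares the statement obvious from the definitions of $\kappa$ and $\kappa_\sigma$ and refers to Nakayama and Lehmann, and the standard way to make that precise is exactly what you do --- identify the plurigenera (and the twisted plurigenera $h^0(mrL+A)$ entering Nakayama's characterization of $\kappa_\sigma$) of a very general closed fiber with those of the geometric generic fiber via generic flatness, cohomology and base change, and flat base change along $\overline{K(Y)}/K(Y)$, then take the countable intersection of the resulting dense opens. The only points worth tightening are that you should shrink $Y$ to its smooth locus so that $K_{X/Y}$ is $\mathbb Q$-Cartier (or work with $K_X+\Delta$ and use that $f^*K_Y$ is trivial on fibers), and that the characterization of $\kappa_\sigma$ via a single ample twist should be quoted for a fixed sufficiently ample $A$ as in Nakayama/Lehmann rather than for an arbitrary ample divisor.
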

\begin{proof}
This is obvious by the definitions of Iitaka's $D$-dimension 
$\kappa$ and Nakayama's numerical Kodaira dimension $\kappa_{\sigma}$. 
For the details, see \cite{nakayama2} and \cite{lehmann}. 
\end{proof}

By combining Theorem \ref{thm3.4} with Lemma \ref{lem3.6}, 
we have: 

\begin{cor}\label{cor3.7}
Let $f:X\to Y$ be a projective surjective morphism 
between normal varieties and let $\Delta$ be 
an effective $\mathbb Q$-divisor on $X$ such that 
$(X, \Delta)$ is klt. 
Then $(X_{\overline \eta}, \Delta_{\overline \eta})$ has 
a good minimal model if and only if 
$(F, \Delta|_F)$ has a good minimal model, where 
$F$ is a sufficiently general fiber of $f:X\to Y$. 
\end{cor}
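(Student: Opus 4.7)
The plan is to reduce the statement to a pair of numerical equivalences via Theorem \ref{thm3.4} and Lemma \ref{lem3.6}. Both pairs $(X_{\overline\eta}, \Delta_{\overline\eta})$ and $(F, \Delta|_F)$ are projective klt pairs (the latter for a sufficiently general $F$, since klt is preserved under restriction to general fibers). Thus Theorem \ref{thm3.4} characterizes the existence of a good minimal model in each case purely in terms of pseudo-effectivity and the coincidence $\kappa = \kappa_\sigma$. So the corollary reduces to showing that
\[
K_{X_{\overline\eta}}+\Delta_{\overline\eta} \text{ is pseudo-effective and } \kappa = \kappa_\sigma
\]
holds if and only if the analogous statement holds for $K_F+\Delta|_F$.

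First I would record that pseudo-effectivity of $K+\Delta$ is equivalent to $\kappa_\sigma(K+\Delta)\geq 0$ (this is how Theorem \ref{thm3.4} has been phrased), so both the pseudo-effectivity condition and the equality $\kappa=\kappa_\sigma$ are statements only about the two numerical invariants $\kappa$ and $\kappa_\sigma$. Then Lemma \ref{lem3.6} provides exactly
\[
\kappa(X_{\overline\eta}, K_{X_{\overline\eta}}+\Delta_{\overline\eta}) = \kappa(F, K_F+\Delta|_F),\qquad
\kappa_\sigma(X_{\overline\eta}, K_{X_{\overline\eta}}+\Delta_{\overline\eta}) = \kappa_\sigma(F, K_F+\Delta|_F).
\]
Combining this with Theorem \ref{thm3.4} applied separately to $(X_{\overline\eta}, \Delta_{\overline\eta})$ and $(F, \Delta|_F)$ yields the desired equivalence.

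The only point that requires mild care is applying Theorem \ref{thm3.4} to $X_{\overline\eta}$, which lives over the algebraically closed field $\overline{K(Y)}$ rather than over $\mathbb{C}$. This is not a real obstacle: one spreads $(X_{\overline\eta},\Delta_{\overline\eta})$ out to a family defined over a finitely generated $\mathbb{C}$-subalgebra, specializes to a very general closed point to recover a klt pair over $\mathbb{C}$ with the same $\kappa$ and $\kappa_\sigma$, and applies Theorem \ref{thm3.4} there; alternatively one simply invokes the fact that the minimal model program and the invariants $\kappa,\kappa_\sigma$ behave identically over any algebraically closed field of characteristic zero. Either way, Theorem \ref{thm3.4} is available in both settings, and the corollary follows immediately by combining it with Lemma \ref{lem3.6}. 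I do not expect any substantive obstacle beyond this mild base-field subtlety.
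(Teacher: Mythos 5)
Your proposal is correct and follows exactly the paper's argument: the paper's proof of Corollary \ref{cor3.7} is simply to combine Theorem \ref{thm3.4} with Lemma \ref{lem3.6}, which is precisely your reduction. Your additional remark about applying Theorem \ref{thm3.4} over the base field $\overline{K(Y)}$ is a reasonable point of care that the paper leaves implicit.
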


\begin{proof}
This statement is obvious by Theorem \ref{thm3.4} and 
Lemma \ref{lem3.6}. 
\end{proof}

Let us give a proof of Theorem \ref{thm3.3} 
for the reader's convenience. 

\begin{proof}[Proof of Theorem \ref{thm3.3}] 
We divide the proof into several steps. 

\begin{step}[(ii)$\Longleftrightarrow$(iii)]\label{Tstep1}
This step is a special case of Corollary \ref{cor3.7}. 
\end{step}
\begin{step}[(iv)$\Longrightarrow$(iii)]\label{Tstep2}
This is obvious since a sufficiently general fiber of $f:X\to Y$ is 
a general fiber of $f:X\to Y$. 
\end{step}
\begin{step}[(i)$\Longrightarrow$(iv)]\label{Tstep3}
We consider the following commutative diagram 
$$
\xymatrix{
X\ar[dr]_{f}\ar@{-->}[rr]^\phi&& X'\ar[dl]^{f'}\\
&Y&
}
$$
where $f': X'\to Y$ is a good minimal model of $X$ over $Y$. 
We take a general point $y\in Y$. 
Let us 
consider the diagram 
$$
\xymatrix{
G\ar[dr]_{f}\ar@{-->}[rr]^\psi&& G'\ar[dl]^{f'}\\
&y&
}
$$ 
where $G=f^{-1}(y)$, $G'=f'^{-1}(y)$, and $\psi=\phi|_G$. 
Since $y\in Y$ is a general point, the 
above diagram satisfies the conditions (ii), (iii), 
(iv), (v), and (vi) in Definition \ref{def3.2}. 
Moreover, $K_{G'}$ is semi-ample because $K_{X'}$ is $f'$-semi-ample. 
If $G'$ is not $\mathbb Q$-factorial, 
then we replace $G'$ with its small projective 
$\mathbb Q$-factorialization. 
Then $G'$ also satisfies the condition (i) in Definition \ref{def3.2} and 
is a good minimal model of $G$. 
\end{step}
\begin{step}[(iii)$\Longrightarrow$(i)]\label{Tstep4}
This is a special case of \cite[Theorem 2.12]{hacon-xu} 
(see also the proof of \cite[Theorem 5.1]{birkar}). 
\end{step}
We have completed the proof of Theorem \ref{thm3.3}. 
\end{proof}

We close this section with 
a useful result, which follows from \cite[Theorem 4.4]{lai} 
(see also \cite[Theorem 1.5]{birkar} and \cite[Theorem 2.12]{hacon-xu}). 

\begin{thm}\label{thm3.8} 
Let $X$ be a smooth projective variety with non-negative 
Kodaira dimension. Then $X$ has a good minimal model if and only if 
the geometric generic fiber of the Iitaka fibration of $X$ has a good minimal 
model. 
\end{thm}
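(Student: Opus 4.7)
My plan is to treat the two implications separately. The forward implication is essentially formal given Corollary \ref{cor3.5}, while the reverse implication is the substantive content and is where the cited results of Lai, Birkar, and Hacon--Xu do the heavy lifting.

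I first reduce to the case where the Iitaka fibration is a genuine morphism. Choose a common log resolution of the Iitaka fibration so that it is represented by a morphism $f:\widetilde X\to Y$ with $\widetilde X$ smooth projective and $Y$ smooth projective. Since $X$ is smooth, Corollary \ref{cor3.5} says $X$ has a good minimal model if and only if $\widetilde X$ does, so I may assume from the outset that we have an Iitaka morphism $f:X\to Y$.

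For ($\Rightarrow$), assume $X$ has a good minimal model $X^{\min}$. Then $K_{X^{\min}}$ is semi-ample and the associated fibration $g:X^{\min}\to Z$ is birational to the Iitaka fibration, with general fiber $F^{\min}$ satisfying $K_{F^{\min}}\sim_{\mathbb Q}0$. By generic smoothness in characteristic zero the geometric generic fiber of $g$ is smooth, and being a smooth projective variety with numerically trivial canonical divisor it is a good minimal model of itself. The geometric generic fiber of $f$ is smooth and birational to the geometric generic fiber of $g$, so Corollary \ref{cor3.5} gives it a good minimal model.

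For ($\Leftarrow$), assume the geometric generic fiber of $f:X\to Y$ has a good minimal model. Applying Theorem \ref{thm3.3} (with $\Delta=0$) yields a good minimal model of $X$ over $Y$, say $f^\dagger:X^\dagger\to Y$, with $K_{X^\dagger}$ semi-ample over $Y$. The remaining task is to upgrade this \emph{relative} good minimal model to an absolute one, and this is exactly the content of \cite[Theorem 4.4]{lai} (see also \cite[Theorem 1.5]{birkar} and \cite[Theorem 2.12]{hacon-xu}). The crucial input is that the base $Y$ of the Iitaka fibration carries a canonically defined $\mathbb Q$-divisor $B_Y$ (the Fujino--Mori discriminant-plus-moduli divisor) for which $(Y,B_Y)$ is klt of log general type; this positivity on the base is what allows one to run an MMP converting the relative semi-ampleness of $K_{X^\dagger/Y}$ into absolute semi-ampleness of $K_{X^\dagger}$.

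The main obstacle is plainly the ($\Leftarrow$) direction: everything else in the argument — the reduction to the case of an Iitaka morphism, the forward implication, and even the passage from \emph{fiber has good minimal model} to \emph{relative good minimal model exists} (Theorem \ref{thm3.3}) — is either formal or already in place. The genuine difficulty lies in the positivity of the Iitaka base and the terminating MMP that produces absolute, rather than merely relative, semi-ampleness of the canonical divisor, which is exactly what the cited works of Lai, Birkar, and Hacon--Xu supply.
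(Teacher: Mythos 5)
Your proposal is correct and follows essentially the same route as the paper, whose entire proof is the citation of \cite[Theorem 4.4]{lai}, \cite[Theorem 5.1]{birkar}, and \cite[Theorem 2.12]{hacon-xu} that you also invoke for the substantive direction. One minor slip in your forward implication: a good minimal model $X^{\min}$ need not be smooth, so generic smoothness does not apply to $g:X^{\min}\to Z$; but the geometric generic fiber of $g$ is still a normal variety with canonical singularities and $\mathbb Q$-trivial canonical divisor, hence its own good minimal model, so Corollary \ref{cor3.5} applies exactly as you use it.
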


\begin{proof}
See \cite[Theorem 4.4]{lai}, \cite[Theorem 5.1]{birkar}, and 
\cite[Theorem 2.12]{hacon-xu}. 
\end{proof}

By Theorem \ref{thm3.8}, 
we know that any smooth projective variety $X$ with 
$\dim X-\kappa (X)\leq 3$ has a good minimal model. 

\begin{rem}\label{rem3.9} 
In the proof of Theorem \ref{thm3.8} and Step \ref{Tstep4} in the 
proof of Theorem \ref{thm3.3}, 
we need the finite generation of canonical rings for 
(relative) klt pairs, which is established in \cite{bchm}. 
We note that the final step of the proof of the finite generation of 
canonical rings for klt pairs needs the canonical bundle formula 
due to Fujino--Mori (see \cite{fujino-mori}). 
We also note that the canonical bundle formula treated in \cite{fujino-mori} 
depends on Theorem \ref{thm1.3}. 
Therefore, our proof of Theorem \ref{thm1.6} in this paper 
implicitly uses Theorem \ref{thm1.3}. 
\end{rem}

\section{Local freeness of $f'_*\omega^{\otimes m}_{X'/Y'}$}\label{sec4}

In this section, we prove the local freeness of $f'_*\omega^{\otimes m}_{X'/Y'}$ 
in Theorem \ref{thm1.6} by using 
minimal model theory and the weak semistable reduction theorem 
due to 
Abramovich--Karu (see \cite{abramovich-karu}). 

Let us start with the proof of the local freeness of 
$f_*\omega^{\otimes m}_{X/Y}$ 
in Theorem \ref{thm1.4}. It is a direct consequence of 
Siu's invariance of plurigenera (see \cite[Corollary 0.2]{siu} and \cite
[Theorem 1]{paun}). 

\begin{proof}[Proof of the local freeness of $f_*\omega^{\otimes m}_{X/Y}$ 
in Theorem \ref{thm1.4}]
By \cite[Corollary 0.2]{siu}, we know that 
$$
\dim H^0(X_y, \mathcal O_{X_y}(mK_{X_y}))
$$ 
is independent of $y\in Y$ for every positive integer $m$ 
(see also \cite[Theorem 1]{paun}). 
By the base change theorem (see \cite[Chapter III, Corollary 12.9]{hartshorne}), 
this implies that 
$f_*\omega^{\otimes m}_{X/Y}$ is locally free for 
every $m\geq 1$. 
\end{proof}

Let us recall the following well-known lemma, which is 
a special case of \cite[Corollary 3]{nakayama1}. 

\begin{lem}[{cf.~\cite[Corollary 3]{nakayama1}}]\label{lem4.1} 
Let $g:V\to C$ be a projective surjective morphism 
from a normal quasi-projective 
variety $V$ to a smooth 
quasi-projective curve $C$. 
Assume that $V$ has only canonical singularities and 
that $K_V$ is $g$-semi-ample. 
Then $R^ig_*\mathcal O_V(mK_V)$ is locally free for every $i$ and 
every positive integer $m$. 
\end{lem}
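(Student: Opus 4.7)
The plan is to reduce the statement to a classical relative torsion-freeness theorem of Koll\'ar--Takegoshi--Ambro--Fujino type. Since $C$ is a smooth curve, a coherent $\mathcal O_C$-module is locally free if and only if it is torsion-free, so it suffices to prove that $R^ig_*\mathcal O_V(mK_V)$ has no $\mathcal O_C$-torsion for every $i$ and every positive integer $m$.

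The only obstruction to a direct application of the torsion-freeness theorem is that $mK_V$ need not be Cartier. To get around this I would reduce to the Cartier case by an index-one cover argument, in the same spirit as the proof of Lemma \ref{lem2.4}. Since torsion-freeness is a local property on $C$, shrink $C$ to an affine open on which $rK_V\sim 0$, where $r$ is the smallest positive integer with $rK_V$ Cartier. Let $\pi\colon \widetilde V\to V$ be the resulting global cyclic cover and set $\widetilde g=g\circ \pi\colon \widetilde V\to C$. As recalled in the proof of Lemma \ref{lem2.4}, $\widetilde V$ is normal with only canonical singularities, $K_{\widetilde V}=\pi^*K_V$ is Cartier, and there is a decomposition
\[
\pi_*\mathcal O_{\widetilde V}(mK_{\widetilde V})\simeq \bigoplus_{i=0}^{r-1}\mathcal O_V((m-i)K_V)
\]
of reflexive sheaves on $V$ into eigenspaces for the Galois action, so that $\mathcal O_V(mK_V)$ appears as a direct summand of $\pi_*\mathcal O_{\widetilde V}(mK_{\widetilde V})$ for every $m\geq 1$.

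Now $K_{\widetilde V}$ is $\widetilde g$-semi-ample because $K_V$ is $g$-semi-ample and $K_{\widetilde V}=\pi^*K_V$. Viewing $(\widetilde V,0)$ as a klt pair, $mK_{\widetilde V}$ as a Cartier divisor, and writing $mK_{\widetilde V}=K_{\widetilde V}+(m-1)K_{\widetilde V}$ with $(m-1)K_{\widetilde V}$ a $\widetilde g$-semi-ample Cartier divisor, the relative Koll\'ar--Takegoshi torsion-freeness theorem applies and implies that $R^i\widetilde g_*\mathcal O_{\widetilde V}(mK_{\widetilde V})$ is torsion-free on $C$ for every $i$. Since $\pi$ is finite, this sheaf coincides with $R^ig_*\pi_*\mathcal O_{\widetilde V}(mK_{\widetilde V})$, and the displayed decomposition exhibits $R^ig_*\mathcal O_V(mK_V)$ as one of its direct summands. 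It is therefore torsion-free on $C$, hence locally free.

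The main point of difficulty I expect is the invocation of torsion-freeness for a Cartier divisor of the form $K_{\widetilde V}+(\textrm{semi-ample Cartier})$ on a variety with only canonical singularities; this is essentially the content of Nakayama's \cite[Corollary 3]{nakayama1}, and in the form needed here it also follows from Koll\'ar's original injectivity--torsion-freeness package, extended by Takegoshi to rational singularities and by Ambro and Fujino to the log canonical setting. A minor bookkeeping point is the verification of the cyclic-cover decomposition above, which one checks on the open locus where $K_V$ is Cartier and extends by taking reflexive hulls using the normality of $V$.
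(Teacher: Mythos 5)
Your overall strategy (reduce to torsion-freeness over the curve, then reduce the non-Cartier case to a Cartier case) is reasonable, and the second half of your argument would go through, but there is a genuine gap in the reduction: the global cyclic cover you invoke does not exist in the generality you need. The index-one cover construction requires a trivialization $rK_V\sim 0$ on the whole space being covered. In Lemma \ref{lem2.4} this is harmless because the Cohen--Macaulay property is local on $V$, so one may shrink $V$ itself. Here the statement to be proved is local only on $C$: you may replace $C$ by a small affine open $U$, but you must then work with all of $g^{-1}(U)$, and $rK_V$ need not be linearly trivial there. Indeed $K_V$ is assumed $g$-semi-ample and can perfectly well be $g$-ample (e.g.\ a product of a canonically polarized variety with index-$r$ canonical singularities and the curve $C$), in which case $rK_V|_{g^{-1}(U)}\not\sim 0$ for every open $U\subset C$. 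So there is no morphism $\pi\colon\widetilde V\to V$ over which to take the eigensheaf decomposition, and the direct-summand argument cannot be run; the cover exists only locally (or \'etale-locally) on $V$, which is not enough to control $R^ig_*$.

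The paper avoids this by performing the ``Cartierization'' on a resolution rather than on a cover: take $h\colon V'\to V$ with $K_{V'}=h^*K_V+E$, where $E\geq 0$ is exceptional (this is where canonicity enters), observe that $\lceil mh^*K_V+E\rceil-(K_{V'}+\{-(mh^*K_V+E)\})=(m-1)h^*K_V$ is semi-ample over $C$, apply the torsion-freeness/local-freeness theorem (\cite[Theorem 6.3 (i)]{fujino-fund}) to $R^i(g\circ h)_*\mathcal O_{V'}(\lceil mh^*K_V+E\rceil)$, and then descend using the relative Kawamata--Viehweg vanishing $R^ih_*\mathcal O_{V'}(\lceil mh^*K_V+E\rceil)=0$ for $i>0$ together with $h_*\mathcal O_{V'}(\lceil mh^*K_V+E\rceil)\simeq\mathcal O_V(mK_V)$. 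This uses exactly the Koll\'ar-type input you had in mind, but the round-up on the resolution replaces the cyclic cover and works globally over $C$. If you wish to keep a covering-free shortcut, you would have to cite Nakayama's \cite[Corollary 3]{nakayama1} (or Takegoshi) directly for $\mathbb Q$-Cartier $K_V$, at which point there is nothing left to prove.
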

\begin{proof}
Let $h:V'\to V$ be a resolution of singularities such that 
$\Exc(h)$ is a simple normal crossing divisor on $V'$. 
We write 
$$
K_{V'}=h^*K_V+E,
$$ 
where $E$ is an effective $h$-exceptional 
$\mathbb Q$-divisor. 
Then we have 
$$
\lceil mh^*K_V+ E\rceil -(K_{V'}+\{-(mh^*K_V+E)\})=(m-1)h^*K_V. 
$$ 
We note that the right hand side is semi-ample over $C$. 
Therefore, $$R^i(g\circ h)_*\mathcal O_{V'}(\lceil mh^*K_V+ 
E\rceil)$$ is locally free for every $i$ and every positive integer 
$m$ (see, for example, \cite[Theorem 6.3 (i)]{fujino-fund}). 
On the other hand, we have 
$$
R^ih_*\mathcal O_{V'}(\lceil mh^*K_V+ E\rceil)=0 
$$ 
for every $i>0$ by the relative Kawamata--Viehweg vanishing theorem, 
and 
$$
h_*\mathcal O_{V'}(\lceil mh^*K_V+
E\rceil)\simeq \mathcal O_V(mK_V). 
$$
Therefore, we obtain that 
$$
R^ig_*\mathcal O_V(mK_V)$$ is locally free for every $i$ and 
every positive integer $m$. 
\end{proof}

\begin{proof}[Proof of the local freeness of $f'_*\omega^{\otimes m}_{X'/Y'}$ 
in Theorem \ref{thm1.6}] Let us divide the proof into several steps. 
\setcounter{step}{0}
\begin{step}[Weak semistable reduction]\label{step1}
By \cite[Definition 0.1 and Theorem 0.3]{abramovich-karu}, 
there exist a generically finite morphism $\tau:Y'\to Y$ from a smooth 
projective variety $Y'$ 
and $f^\dag: X^\dag\to Y'$  
with the following properties. 
\begin{itemize}
\item[(i)] $X^\dag$ is a normal projective 
Gorenstein (see \cite[Lemma 6.1]{abramovich-karu}) 
variety which is birationally equivalent to 
$X\times _Y Y'$. 
\item[(ii)] $(U_{X^\dag}\subset X^\dag)$ and 
$(U_{Y'}\subset Y')$ are toroidal embeddings without self-intersection, with 
$U_{X^\dag} =(f^\dag)^{-1}(U_{Y'})$. 
\item[(iii)] $f^\dag: (U_{X^\dag}\subset X^{\dag})
\to (U_{Y'}\subset Y')$ is toroidal and equidimensional. 
\item[(iv)] all the fibers of the morphism $f^\dag$ are reduced. 
\end{itemize}
In \cite{abramovich-karu}, $f^\dag: X^{\dag}\to Y'$ is said to be weakly semistable and is 
called a 
weak semistable reduction of $f:X\to Y$. 
For the details of toroidal embeddings and 
morphisms, see \cite[Section 1]{abramovich-karu}. 
We may further assume that $X^\dag$ is 
$\mathbb Q$-factorial (see \cite[Remark 4.3]{abramovich-karu}).  
Note that $X^{\dag}$ has only rational singularities 
since $X^{\dag}$ is toroidal. 
Therefore, $X^{\dag}$ has only 
canonical Gorenstein singularities and is Cohen--Macaulay. 
Thus, we have $$
f^\dag_*\mathcal O_{X^{\dag}}(mK_{X^{\dag}/Y'})\simeq f'_*\omega^{\otimes m}_{X'/Y'}
$$ 
for every positive integer $m$. 
Therefore, it is sufficient to prove 
that $f^\dag_*\mathcal O_{X^{\dag}}(mK_{X^{\dag}/Y'})$ is locally 
free for every positive integer $m$. 
We also note that $f^\dag$ is flat since $Y'$ is smooth, $X^\dag$ is 
Cohen--Macaulay, and $f^\dag$ is equidimensional 
(see \cite[Chapter III, Exercise 10.9]{hartshorne} 
and \cite[Chapter V, Proposition (3.5)]{altman-kleiman}). 
\end{step}

\begin{rem}
We may assume that $f^\dag$ is smooth 
over $U_{Y'}$ although we do not need this property in this paper. 
For the details, see the construction of weak semistable reductions 
in \cite{abramovich-karu}. 
\end{rem}

\begin{step}[Relative good minimal models]\label{step2}
By the assumption of Theorem \ref{thm1.6} and Corollary \ref{cor3.5}, 
the geometric generic fiber of $f^\dag: X^\dag\to Y'$ has a good minimal 
model. Therefore, $f^\dag: X^\dag\to Y'$ has a relative good minimal 
model $\widetilde f: \widetilde X\to Y'$ 
by Theorem \ref{thm3.3}. 
Note that 
$$
f^\dag_*\mathcal O_{X^{\dag}}(mK_{X^{\dag}/Y'})\simeq 
\widetilde f_*\mathcal O_{\widetilde X}(mK_{{\widetilde X}/Y'})
$$ 
for every positive integer $m$. 
Therefore, it is sufficient to prove 
that 
${\widetilde f}_*\mathcal O_{\widetilde X}(mK_{\widetilde {X}/Y'})$ is locally 
free for every positive integer $m$. 
\end{step}
\begin{step}[Local freeness via the 
flat base change theorem]\label{step3}
We take an arbitrary point $P\in Y'$. 
We take general very ample Cartier divisors 
$H_1, H_2, \cdots, H_{n-1}$, where $n=\dim Y$, such that 
$C=H_1\cap H_2\cap \cdots \cap H_{n-1}$ is a smooth 
projective curve passing 
through $P$. 
By \cite[Lemma 6.2]{abramovich-karu}, 
we see that $X^\dag_C=X^\dag\times _{Y'}C\to C$ is weakly 
semistable. 
In particular, $X^\dag_C$ has only rational Gorenstein singularities 
(see \cite[Lemma 6.1]{abramovich-karu}). 
By adjunction, 
we see that 
${\widetilde X}_C=\widetilde X \times _{Y'}C$ is normal and 
has only canonical singularities. 
More precisely, 
$(f^\dag)^*H_1= X^\dag\times _{Y'} H_1= X^\dag_{H_1}$ has 
only rational Gorenstein singularities since 
$X^\dag_{H_1}\to H_1$ is weakly 
semistable by \cite[Lemma 6.1 and 
Lemma 6.2]{abramovich-karu}. 
In particular, $(f^\dag)^*H_1$ has only canonical singularities. 
Therefore, 
$(X^\dag, (f^\dag)^*H_1)$ is plt by the inversion of 
adjunction (see \cite[Theorem 5.50]{kollar-mori}). 
So we have that $(\widetilde X, {\widetilde f}^*H_1)$ is 
plt by the negativity lemma (see, for example, \cite[Proposition 3.51]{kollar-mori}). 
Thus, $\widetilde X_{H_1}=\widetilde X\times _{Y'} H_1={\widetilde f}
^*H_1$ is 
normal (see \cite[Proposition 5.51]{kollar-mori}). 
By adjunction and the negativity lemma again, 
we obtain that ${\widetilde X}_{H_1}$ has only 
canonical singularities. By repeating this process $(n-1)$-times, 
we obtain that 
${\widetilde X}_C$ has only canonical singularities.  
Note that $\widetilde X_C\to C$ is equidimensional. 
Therefore, 
we see that 
$\widetilde f: \widetilde X\to Y'$ is equidimensional by the 
choice of $C$.  
Since $\widetilde X$ is Cohen--Macaulay and $Y'$ is smooth, 
$\widetilde f$ is flat (see \cite[Chapter III, Exercise 10.9]{hartshorne} 
and \cite[Chapter V, Proposition (3.5)]{altman-kleiman}). 
Moreover, $\mathcal O_{\widetilde X}(mK_{\widetilde X})$ is 
flat over $Y'$ for every integer $m$ since $\mathcal O_{\widetilde X}(m
K_{\widetilde X})$ is Cohen--Macaulay by Lemma \ref{lem2.4} and 
$\widetilde f$ is equidimensional (see \cite[Chapter V, Proposition (3.5)]
{altman-kleiman}). 
By applying Lemma \ref{lem4.1} and 
the base change theorem (see \cite[Chapter III, Theorem 12.11]{hartshorne}) 
to $\widetilde X_C\to C$, 
we obtain that 
$$
\dim H^0({\widetilde X}_y, 
\mathcal O_{\widetilde X}(mK_{\widetilde X/Y'})|_{\widetilde X_y})
$$ 
is independent of $y\in Y'$ for every positive integer $m$. 
By the base change theorem (see \cite[Chapter III, Corollary 12.9]{hartshorne}), 
we obtain that 
$f'_*\omega^{\otimes m}_{X'/Y'} 
\simeq 
{\widetilde f}_*\mathcal O_{\widetilde X}(mK_{\widetilde {X}/Y'})$ is 
locally free for every positive integer $m$. 
\end{step}
We have 
completed the proof of the local freeness of $f'_*\omega^{\otimes m}_{X'/Y'}$. 
\end{proof}

\begin{rem}
In general, $\widetilde X_y$ may be non-normal. 
However, we see that the canonical 
divisor $K_{\widetilde X_y}$ is well-defined, 
$\widetilde X_y$ has only semi log canonical singularities, 
and  $\mathcal O_{\widetilde X}(mK_{\widetilde X/Y'})|_{\widetilde X_y}
\simeq \mathcal O_{\widetilde X_y}(mK_{\widetilde X_y})$ 
for every positive integer $m$, by 
adjunction. 
For the details of semi log canonical singularities and pairs, 
see \cite{fujino-slc}. 
\end{rem}

In Step \ref{step3} in the proof of the local freeness of $f'_*\omega^{\otimes m}
_{X'/Y'}$ in Theorem \ref{thm1.6}, we have proved: 

\begin{thm}\label{thm4.2}
Let $\pi:V\to W$ be a projective surjective morphism between 
quasi-projective varieties with connected fibers. 
Assume the following conditions: 
\begin{itemize}
\item[(i)] $W$ is smooth, 
\item[(ii)] $(U_V\subset V)$ and $(U_W\subset W)$ are 
toroidal embeddings without self-intersection, with $U_V=\pi^{-1}(U_W)$,  
\item[(iii)] $f:(U_V\subset V)\to (U_W\subset W)$ is toroidal and 
equidimensional, and 
\item[(iv)] all the fibers of the morphism $\pi$ are reduced. 
\end{itemize} 
In this case, $\pi:V\to W$ is said to be weakly semistable. 
We know that $V$ has only rational Gorenstein singularities. 
Let $V'$ be a minimal model of $V$ over $W$ 
sitting in the following diagram. 
$$
\xymatrix{
V\ar[dr]_{\pi}\ar@{-->}[rr]^\phi&& V'\ar[dl]^{\pi'}\\
&W&
}
$$
Let $P\in W$ be an arbitrary point and let $C$ be a smooth 
curve such that $P\in C$ and that $C=H_1\cap H_2\cap 
\cdots \cap H_{\dim W-1}$, 
where $H_i$ is a general smooth Cartier divisor on $W$ for every $i$. 
Then $V_C=V\times _W C\to C$ is 
weakly semistable and 
$V'_C=V'\times _W C$ is normal and has only canonical singularities. 
This implies that $\pi': V'\to W$ is equidimensional. 
In particular, $\pi'$ is flat. 
\end{thm}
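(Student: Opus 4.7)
The plan is to repackage the slicing argument used in the proof of local freeness of $f'_*\omega^{\otimes m}_{X'/Y'}$ in Theorem \ref{thm1.6}, since Theorem \ref{thm4.2} is essentially the abstract statement underlying that step. The strategy is to reduce every assertion to the one-parameter situation by cutting with general very ample divisors, and then to propagate normality and canonical singularities from $V_C$ across the birational map $\phi$ to $V'_C$ by combining inversion of adjunction with the negativity lemma.

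First, the weak semistability of $V_C \to C$ is exactly \cite[Lemma 6.2]{abramovich-karu}, which guarantees that weak semistability is preserved under base change to a general complete intersection curve on $W$; by \cite[Lemma 6.1]{abramovich-karu}, $V_C$ inherits rational Gorenstein singularities. For the corresponding assertion on $V'_C$, I would argue by induction on codimension, peeling off one hyperplane at a time. At the first step, $\pi^*H_1 = V_{H_1}$ has rational Gorenstein, hence canonical, singularities because $V_{H_1} \to H_1$ is itself weakly semistable. Inversion of adjunction (\cite[Theorem 5.50]{kollar-mori}) then gives that $(V,\pi^*H_1)$ is plt in a neighborhood of $\pi^*H_1$. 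Since $\phi:V\dashrightarrow V'$ is a relative minimal model, the negativity lemma (\cite[Proposition 3.51]{kollar-mori}) forces the discrepancies of all $\phi$-exceptional divisors to strictly increase, so $(V',\pi'^*H_1)$ is also plt. Consequently $V'_{H_1} = \pi'^*H_1$ is normal (\cite[Proposition 5.51]{kollar-mori}), and adjunction combined with a second application of the negativity lemma upgrades this to: $V'_{H_1}$ has only canonical singularities. Iterating the argument $(\dim W - 1)$ times gives $V'_C$ normal with only canonical singularities.

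For equidimensionality, the key observation is that $V'_C$ is normal and irreducible of dimension $\dim V' - \dim W + 1$ and maps onto the smooth curve $C$, so every one of its fibers has dimension $\dim V'-\dim W$. Since $P\in W$ was arbitrary and $C$ was chosen to pass through $P$, every fiber of $\pi'$ has the generic dimension, so $\pi'$ is equidimensional. Flatness follows from miracle flatness: $V'$ has canonical, hence Cohen--Macaulay, singularities, $W$ is smooth, and $\pi'$ is equidimensional, so \cite[Chapter III, Exercise 10.9]{hartshorne} (equivalently \cite[Chapter V, Proposition (3.5)]{altman-kleiman}) yields flatness. The main technical point — and the only real obstacle — is the inductive step propagating \emph{canonical} singularities, not just plt-ness of the pair, from the weakly semistable $V_{H_i}$ to the corresponding slice $V'_{H_i}$ on the minimal model side; everything else is bookkeeping or a black-box appeal to Abramovich--Karu and miracle flatness.
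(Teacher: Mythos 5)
Your proposal is correct and follows essentially the same route as the paper: the paper explicitly notes that Theorem \ref{thm4.2} is just an abstraction of Step \ref{step3} in the proof of the local freeness in Theorem \ref{thm1.6}, and that step is exactly your argument (Abramovich--Karu Lemmas 6.1 and 6.2 for the weak semistability of $V_C\to C$, then the inductive slicing via inversion of adjunction, the negativity lemma, and adjunction to get that $V'_C$ is normal with canonical singularities, and finally equidimensionality plus miracle flatness). No gaps.
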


Theorem \ref{thm4.2} seems to be useful for various geometric applications. 
So we wrote it separately for the reader's convenience. 
Note that Theorem \ref{thm4.2} (see also Step \ref{step3} in the 
proof of the local freeness of $f'_*\omega^{\otimes m}_{X'/Y'}$ in 
Theorem \ref{thm1.6}) is a key point of this paper. 

\section{Nefness of $f'_*\omega^{\otimes m}_{X'/Y'}$}\label{sec5}

In this section, we prove that $f'_*\omega^{\otimes m}_{X'/ Y'}$ 
in Theorem \ref{thm1.6} is nef (numerically semipositive) 
by using \cite{popa-schnell}. 
We do not use the theory of variations of Hodge structure.   
Theorem \ref{thm5.1}, which is 
a key ingredient of this section, follows from \cite[Theorem 1.4]{popa-schnell}. 

\begin{thm}\label{thm5.1} 
Let $f:X\to Y$ be a surjective morphism between smooth projective 
varieties with connected fibers. 
Let $\mathcal L$ be an ample and globally generated 
line bundle on $Y$ and let $k$ be a positive integer. 
Then $$
f_*\omega^{\otimes k} _X\otimes \mathcal L^{\otimes l}\simeq 
f_*\omega^{\otimes k}_{X/Y}\otimes \omega^{\otimes k}_Y\otimes 
\mathcal L^{\otimes l}
$$ 
is generated by global sections for $l\geq k(\dim Y+1)$. 
\end{thm}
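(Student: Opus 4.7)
The plan is to deduce Theorem \ref{thm5.1} directly from the effective global generation result \cite[Theorem 1.4]{popa-schnell} of Popa and Schnell, which is essentially a reformulation of it. The first step is to establish the isomorphism displayed in the statement. By the definition in \ref{say2.1}, we have $\omega_{X/Y}=\omega_X\otimes f^*\omega_Y^{-1}$, so taking the $k$-th tensor power gives
$$
\omega_X^{\otimes k}\simeq \omega_{X/Y}^{\otimes k}\otimes f^*\omega_Y^{\otimes k}.
$$
Applying $f_*$ and using the projection formula (note that $\omega_Y^{\otimes k}$ is locally free on $Y$) yields
$$
f_*\omega_X^{\otimes k}\simeq f_*\omega_{X/Y}^{\otimes k}\otimes \omega_Y^{\otimes k},
$$
and twisting both sides by $\mathcal L^{\otimes l}$ produces the asserted isomorphism.

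For the global generation assertion, I would simply invoke \cite[Theorem 1.4]{popa-schnell}, which states that for a surjective morphism $f:X\to Y$ between smooth projective varieties and an ample and globally generated line bundle $\mathcal L$ on $Y$, the sheaf $f_*\omega_X^{\otimes k}\otimes \mathcal L^{\otimes l}$ is generated by its global sections whenever $l\geq k(\dim Y+1)$. Combined with the isomorphism above, this is exactly the conclusion of Theorem \ref{thm5.1}.

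In this deduction there is no genuine obstacle: the projection formula is formal, and the numerical bound $l\geq k(\dim Y+1)$ is precisely the bound produced by Popa--Schnell. The entire technical content lies in their theorem, whose proof combines Koll\'ar-type vanishing with Siu's invariance of plurigenera and Mukai's notion of $M$-regularity. The present statement is therefore best viewed as a convenient reformulation tailored for the subsequent application, in combination with Viehweg's fiber product trick, to the nefness of $f'_*\omega^{\otimes m}_{X'/Y'}$ in Theorem \ref{thm1.6}.
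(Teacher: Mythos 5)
Your proposal is correct and matches the paper exactly: the paper's proof is simply a citation of Popa--Schnell (\cite[Theorem 1.4]{popa-schnell} and \cite[Section 2]{popa-schnell}), with the displayed isomorphism being the routine projection-formula identity you spell out. The entire technical content does indeed reside in the cited effective freeness theorem.
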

\begin{proof}
See \cite[Section 2]{popa-schnell}. 
\end{proof}

\begin{rem}\label{rem5.2}
Theorem \ref{thm5.1} holds under the weaker assumption that 
$X$ is a normal projective variety with only rational Gorenstein singularities. 
Note that $X$ has only rational Gorenstein singularities 
if and only if $X$ has only canonical Gorenstein singularities. 
\end{rem}

\begin{lem}\label{lem5.3}
Let $\mathcal E$ be 
a non-zero locally free sheaf of finite rank on a smooth 
projective variety $V$. 
Assume that 
there exists a line bundle $\mathcal M$ 
such that $\mathcal E^{\otimes s}\otimes \mathcal M$ 
is 
generated by global sections for every positive integer $s$. 
Then $\mathcal E$ is nef. 
\end{lem}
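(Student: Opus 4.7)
The plan is to check nefness of $\mathcal E$ by verifying that the tautological line bundle $\mathcal O_{\mathbb P_V(\mathcal E)}(1)$ has non-negative degree on every irreducible curve in $\mathbb P_V(\mathcal E)$, exactly as required by Definition \ref{def1.2}. I would work on the projective bundle $\pi: \mathbb P_V(\mathcal E) \to V$, using the convention $\mathbb P_V(\mathcal E) = \mathrm{Proj}\, \mathrm{Sym}^\bullet \mathcal E$, so that $\pi_*\mathcal O(s) \simeq \mathrm{Sym}^s \mathcal E$ and the canonical surjection $\pi^*\mathrm{Sym}^s \mathcal E \twoheadrightarrow \mathcal O_{\mathbb P_V(\mathcal E)}(s)$ holds for every $s \geq 1$.

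First I would upgrade the hypothesis from tensor to symmetric powers. Since there is a natural surjection $\mathcal E^{\otimes s} \twoheadrightarrow \mathrm{Sym}^s \mathcal E$, tensoring with $\mathcal M$ and using global generation of $\mathcal E^{\otimes s}\otimes \mathcal M$ yields that $\mathrm{Sym}^s \mathcal E \otimes \mathcal M$ is generated by global sections for every $s \geq 1$. Pulling this back by $\pi$, the sheaf $\pi^*(\mathrm{Sym}^s \mathcal E \otimes \mathcal M)$ is globally generated, and composing with the tautological surjection gives that $\mathcal O_{\mathbb P_V(\mathcal E)}(s) \otimes \pi^* \mathcal M$ is globally generated, hence nef, for every $s \geq 1$.

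Now I would pick any irreducible curve $C \subset \mathbb P_V(\mathcal E)$ and intersect. From the nefness above,
\[
s\bigl(\mathcal O_{\mathbb P_V(\mathcal E)}(1) \cdot C\bigr) + \bigl(\pi^*\mathcal M \cdot C\bigr) \;=\; \bigl(\mathcal O_{\mathbb P_V(\mathcal E)}(s) \otimes \pi^*\mathcal M\bigr) \cdot C \;\geq\; 0
\]
for every positive integer $s$. Since $\pi^*\mathcal M \cdot C$ is a fixed integer independent of $s$, dividing by $s$ and letting $s \to \infty$ gives $\mathcal O_{\mathbb P_V(\mathcal E)}(1) \cdot C \geq 0$. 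As $C$ was arbitrary, $\mathcal O_{\mathbb P_V(\mathcal E)}(1)$ is nef, so $\mathcal E$ is nef by definition.

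There is no real obstacle: the argument is the standard one that shows that a coherent bound of the form \lq\lq symmetric powers become globally generated after a fixed twist\rq\rq\ forces nefness. The only small point worth spelling out is the passage from tensor to symmetric powers via the natural surjection, which is what lets one apply the projective-bundle criterion rather than a flag-bundle variant.
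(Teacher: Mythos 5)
Your proof is correct and follows essentially the same route as the paper: pass from tensor to symmetric powers via the natural surjection, deduce global generation of $\mathcal O_{\mathbb P_V(\mathcal E)}(s)\otimes \pi^*\mathcal M$, and conclude nefness of $\mathcal O_{\mathbb P_V(\mathcal E)}(1)$. The only difference is that you spell out the final divide-by-$s$ curve-intersection argument, which the paper leaves implicit.
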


\begin{proof}
We put $\pi:W=\mathbb P_V(\mathcal E)\to V$ and 
$\mathcal O_W(1)=\mathcal O_{\mathbb P_V(\mathcal E)}(1)$. 
Since $\mathcal E^{\otimes s}\otimes \mathcal M$ is generated by 
global sections, 
$\mathrm{Sym}^s\mathcal E\otimes \mathcal M$ is 
also generated by global sections for every positive integer $s$. 
This implies that $\mathcal O_W(s)\otimes \pi^*\mathcal M$ is 
generated by global sections for every positive integer $s$. 
Thus, we obtain that $\mathcal O_W(1)$ is nef, equivalently, 
$\mathcal E$ is nef. 
\end{proof}

Let us prove the nefness of $f_*\omega^{\otimes m}_{X/Y}$ in 
Theorem \ref{thm1.4}. 

\begin{proof}[Proof of the nefness of $f_*\omega^{\otimes m}_{X/Y}$ 
in Theorem \ref{thm1.4}] 
We take the $s$-fold fiber product 
$$f^s:X^s=X\times _YX\times _Y\cdots \times _Y X\to Y. 
$$ 
Since $f$ is smooth, 
$X^s$ is a smooth projective variety and $f^s$ is smooth. 
We will check 
$$
f^s_*\omega^{\otimes m}_{X^s/Y}\simeq \bigotimes ^{s} 
f_*\omega^{\otimes m}_{X/Y} 
$$ 
for every positive integer $m$ by induction on $s$. 
We consider the following commutative diagram: 
$$
\xymatrix{
X^s\ar[r]^p\ar[d]_q&\ X^{s-1}\ar[d]^{f^{s-1}}\\
X\ar[r]_f & Y. 
}
$$ 
By  base change, 
we have $\omega_{X^s/X}\simeq p^*\omega_{X^{s-1}/Y}$. 
Thus we have 
\begin{align*}
\omega_{X^s/Y}&\simeq \omega_{X^s/X}\otimes q^*\omega_{X/Y} 
\\&\simeq p^*\omega_{X^{s-1}/Y}\otimes q^*\omega_{X/Y}. 
\end{align*}
Therefore, by the flat base change theorem 
(see \cite[Chapter III, Proposition 9.3]{hartshorne}) 
and the projection formula, we obtain 
\begin{align*}
f^s_*\omega^{\otimes m}_{X^s/Y}&\simeq 
f^{s-1}_*p_*(p^*\omega^{\otimes m}_{X^{s-1}/Y}
\otimes q^*\omega^{\otimes m}_{X/Y})\\
&\simeq f^{s-1}_*(\omega^{\otimes m}_{X^{s-1}/Y}
\otimes p_*q^*\omega^{\otimes m}_{X/Y})\\
&\simeq  f^{s-1}_*(\omega^{\otimes m}_{X^{s-1}/Y}
\otimes (f^{s-1})^*f_*\omega^{\otimes m}_{X/Y})\\
&\simeq f_*\omega^{\otimes m}_{X/Y}\otimes 
f^{s-1}_*\omega^{\otimes m}_{X^{s-1}/Y}\\
&\simeq \bigotimes ^s f_*\omega^{\otimes m}_{X/Y}
\end{align*} 
for every positive integer $m$ and every positive integer $s$ by 
induction on $s$. Note that 
$f_*\omega^{\otimes m}_{X/Y}$ is locally free for every positive integer $m$ 
(see Section \ref{sec4}). 
We put $\mathcal M=\omega^{\otimes m}_Y\otimes \mathcal L^{\otimes 
m(\dim Y+1)}$, where 
$\mathcal L$ is an ample and 
globally generated line bundle on $Y$. 
By Theorem \ref{thm5.1}, 
we obtain that 
$$
f^s_*\omega^{\otimes m}_{X^s/Y} \otimes \mathcal M
$$ 
is generated by global sections for every positive integer $s$. 
This means that 
$$
\bigotimes ^{s} 
f_*\omega^{\otimes m}_{X/Y} 
\otimes \mathcal M
$$ is generated by global sections for every positive integer $s$. 
By Lemma \ref{lem5.3}, 
we obtain that $f_*\omega^{\otimes m}_{X/Y}$ is nef. 
\end{proof}

In Section \ref{sec4}, we have already proved that 
$f'_*\omega^{\otimes m}_{X'/Y'}$ 
is locally free in Theorem \ref{thm1.6}. 
From now on, we prove the nefness of 
$f'_*\omega^{\otimes m}_{X'/Y'}$ in Theorem \ref{thm1.6}. 

\begin{proof}[Proof of the nefness of 
$f'_*\omega^{\otimes m}_{X'/Y'}$ in Theorem \ref{thm1.6}]
By the proof of the local freeness of $f'_*\omega^{\otimes m}_{X'/Y'}$ in 
Section \ref{sec4}, we may assume that $f': X'\to Y'$ is 
weakly semistable. 
For simplicity, we denote $f':X'\to Y'$ by $f:X\to Y$ in this proof.  
We take the $s$-fold fiber product 
$$
f^s:X^s=X\times _YX\times _Y \cdots \times _YX \to Y. 
$$ 
Then we see that $X^s$ is normal and Gorenstein (cf.~\cite[Lemma 3.5]{viehweg}). 
Moreover, $X^s$ has only rational singularities because 
$X^s$ is local analytically isomorphic to 
a toric variety. 
Therefore, $X^s$ has only canonical singularities. 
By the same argument as in the proof of 
the nefness of $f_*\omega^{\otimes m}_{X/Y}$ in 
Theorem \ref{thm1.4}, we obtain 
$$
f^s_*\omega^{\otimes m}_{X^s/Y}\simeq 
\bigotimes ^s f_*\omega^{\otimes m}_{X/Y}
$$ for every positive integer $s$ and every positive integer $m$. 
By Theorem \ref{thm5.1} (see also Remark \ref{rem5.2}) 
and Lemma \ref{lem5.3}, 
we obtain that the locally free sheaf $f_*\omega^{\otimes m}_{X/Y}$ is 
nef for every positive integer $m$. 
This is the same as the proof of the nefness of $f_*\omega^{\otimes m}
_{X/Y}$ in Theorem \ref{thm1.4}. 
Anyway, we obtain the desired nefness. 
\end{proof}
%%%%%%%%%%%%%%%

\end{document}